\newtheoremstyle{thmsty}{6pt}{6pt}{\normalfont\selectfont\itshape}{18pt}{\normalfont\selectfont\scshape}{.}{.5em}{}
\theoremstyle{thmsty}
\newtheorem{theorem}{Theorem}
\newtheorem{thm}{Theorem}
\newtheorem{lemma}[thm]{Lemma}
\newtheorem{cor}[thm]{Corollary}
\newtheorem{prop}[thm]{Proposition}
\numberwithin{theorem}{section}
\numberwithin{equation}{section}
\newcommand{\vecspan}{\operatorname{span}}
\numberwithin{equation}{section}
\numberwithin{theorem}{section}
\numberwithin{thm}{section}
\numberwithin{table}{section}
\def\vol {{\mathrm{vol\,}}}
\newcommand{\rank}{\operatorname{rank}}
\def \balpha{\bm{\alpha}}
\def \bgamma{\bm{\gamma}}
\def \blambda{\bm{\lambda}}
\def \bsigma{\bm{\sigma}}
\def \bnu{\bm{\nu}}
\def \ba{\mathbf{a}}
\def \bb{\mathbf{b}}
\def \bc{\mathbf{c}}
\def \bu{\mathbf{u}}
\def \bv{\mathbf{v}}
\def \bd{\mathbf{d}}
\def \be{\mathbf{e}}
\def \bs{\mathbf{s}}
\def \br{\mathbf{r}}
\def \bq{\mathbf{q}}
\def \by{\mathbf{y}}
\def\boldm{\mathbf{m}}
\def \bx{\mathbf{x}}
\def \bzero{\mathbf{0}}
\def \bone{\mathbf{1}}
\def \bfeta{\bm{\eta}}
\def \energy{\mathsf{E}}
\newcommand{\bfxi}{{\boldsymbol{\xi}}}
\def \dif{\mathrm{d}}
\def\cA{{\mathscr A}}
\def\cB{{\mathscr B}}
\def\cC{{\mathscr C}}
\def\cD{{\mathscr D}}
\def\cE{{\mathscr E}}
\def\cH{{\mathscr H}}
\def\cI{{\mathscr I}}
\def\cJ{{\mathscr J}}
\def\cL{{\mathscr L}}
\def\cN{{\mathscr N}}
\def\cR{{\mathscr R}}
\def\cS{{\mathscr S}}
\def\cT{{\mathscr T}}
\def\cU{{\mathscr U}}
\def\cX{{\mathscr X}}
\def\cY{{\mathscr Y}}
\def \A {{\mathbf{A}}}
\def \C {{\cc}}
\def \K {{\mathbf{K}}}
\def \N {{\nn}}
\def \P {{\mathbf P}}
\def \Q {{\qq}}
\def \R {{\rr}}
\def \Z {{\zz}}
\def\energy{\mathsf{E}}
\def\fB{\mathfrak{B}}
\def\fD{{\mathfrak D}}
\def\fI{{\mathfrak{Im}}\:}
\def\fP{{\mathfrak P}}
\def\fR{{\mathfrak{Re}}\:}
\def\fS{{\mathfrak  S}}
\def \fV{{\mathfrak V}}
\def\e{{\mathbf{\,e}}}
\def \matn{\N^{\boldm}}
\def \matz{\Z_+^{\boldm}}
\def \matc{\C^{\boldm}}
\def \matr{\R_+^{\boldm}}
\renewcommand*{\bibnamedash}{\rule[.4ex]{3em}{.6pt}}
\newcommand\lle{\mathop{\:\ll\:}_\varepsilon}
\newcommand\nn{\boldsymbol{\mathrm{N}}}
\newcommand\zz{\boldsymbol{\mathrm{Z}}}
\newcommand\qq{\boldsymbol{\mathrm{Q}}}
\newcommand\rr{\boldsymbol{\mathrm{R}}}
\newcommand\cc{\boldsymbol{\mathrm{C}}}
\let\Gamma\varGamma
\let\Delta\varDelta
\let\Theta\varTheta
\let\Lambda\varLambda
\let\Xi\varXi
\let\Pi\varPi
\let\Sigma\varSigma
\let\Upsilon\varUpsilon
\let\Phi\varPhi
\let\Psi\varPsi
\let\Omega\varOmega
\xpatchcmd{\proof}{\hskip\labelsep}{\hskip4\labelsep}{}{}
\def\rint{\mathop{\rotatebox[origin=c]{15}{\scalebox{1.35}{$\int$}}}}
\titlespacing{\section}{18pt}{-5pt}{10pt}
\titleformat{\section}[runin]{\normalfont\bfseries}{\thesection.}{6pt}{}
\titlespacing{\subsection}{18pt}{-10pt}{10pt}
\titleformat{\subsection}[runin]{\normalfont\bfseries}{\thesubsection.}{4pt}{\normalfont\itshape}
\begin{document}
\centerline{\large{MOMENTS OF RESTRICTED DIVISOR FUNCTIONS}}\par\vspace{3.5mm}
\centerline{\small\scshape{M. Afifurrahman and C. C. Corrigan}}\par
\centerline{\textit{School of Mathematics and Statistics,}}\par
\centerline{\textit{University of New South Wales}}\par\vspace{3.5mm}
\centerline{\small{5$^{th}$\,of September, 2025}}\par\vspace{5mm}
{\small\noindent\textit{Abstract.} In this article, we study the higher-power moments of restricted divisor functions.  In order to establish our main results, we prove a more general result pertaining to the distribution of solutions to certain multiplicative Diophantine equations.\par\vspace{2mm}
\noindent\textit{2020 Mathematics Subject Classification.} 11D45, 11C20, 11D72.\par\vspace{2mm}
\noindent\textit{Key words.} Divisor functions, multiplicative energy, multiplicative Diophantine equations.}\par\vspace{5mm}

\section{Introduction.}
There are many important applications in analytic number theory for asymptotic formul\ae\:describing the distribution of solutions to Diophantine equations such as
\begin{align}\label{eqn:1111}
&x_1x_2=x_3x_4.
\end{align}
Indeed, the number of integer solutions to \eqref{eqn:1111} with $1\leqslant x_j\leqslant H$ is precisely the multiplicative energy of the first $H$ natural numbers, which appears in the theory of character sums (e.g.\:\cite{Ayyad}).  Moreover, the number of solutions to~\eqref{eqn:1111} over some finite subset $\mathscr{N}\subset\Z$ is precisely the number of singular $2\times 2$ integer matrices with entries in $\mathscr{N}$.  Therefore, results on the statistics of this equation complement results in the recent works~\cite{GG1,GG2,S,AKOS,Af} on the number of $2\times2$ integer matrices with bounded entries and fixed determinant.  Furthermore, the distribution of solutions to \eqref{eqn:1111} can be used to classify maximal-size product sets of random subsets of $\Z$ (e.g.\:\cite{M}), and, in short arithmetic progressions is also related to the problem of determining the irrational numbers $\alpha$ for which the pair correlation for the fractional parts of $n^2\alpha$ is Poissonian (e.g.\:\cite{Truelsen,S2}).

A more general result of Heath-Brown \cite{HB} can be used to show, for a certain class of smooth weights $w:\Z^4\to\R_+$, that an asymptotic formula of the type
\begin{equation*}
    \sum_{\substack{\bx\in\Z^4\\x_1x_2=x_3x_4}}w\big(X^{-1}\bx\big)=C_1(w)X^2\log X+C_2(w)X^2+O_\varepsilon(X^{3/2+\varepsilon})
\end{equation*}
holds for some constants $C_j(w)$, depending at most on $w$, where $C_1(w)>0$ can be explicitly calculated.  Comparatively, counting solutions to \eqref{eqn:1111} with respect to the supremum norm, it was shown by Ayyad, Cochrane, and Zheng \cite{Ayyad} (cf.\:\cite{M2,Af}) that\begin{align}\label{eqn:MA}
    \sum_{\substack{\bx\in[1,X]^4\\x_1x_2=x_3x_4}}1=\frac{2}{\zeta(2)}X^2\log X+AX^2+O\big(X^{19/13}(\log X)^{7/13}\big),
\end{align}where$$A=\frac{4\gamma-2\zeta'(2)/\zeta(2)-1-2\zeta(2)}{\zeta(2)},$$with $\gamma$ being the Euler–Mascheroni constant.  More recently, counting instead with respect to the Euclidean norm, Louboutin and Munsch \cite{louboutin} demonstrated that
\begin{equation}\label{e:loumun}
    \sum_{\substack{\bx\in\mathfrak{S}(X)\\x_1x_2=x_3x_4}}1=\tfrac34X^2\log X+O(X^2),
\end{equation}
where $\mathfrak{S}(X)$ denotes the first octant of the sphere of radius $X$ in $\zz^4$.  In fact, in private communications, Munsch has pointed out that, with little extra effort, one may replace the error term in \eqref{e:loumun} with a secondary term $cX^2+o_\eta(X^{2-\eta})$ for some explicit constant $c$.

We remark that the sum of interest in \eqref{eqn:MA} can be interpreted as the second moment of a certain restricted divisor function.  More generally, for any $\mathbf{X}\in\R_+^m$, the number of solutions to the Diophantine equation
\begin{equation*}
    x_{1,1}\cdots x_{1,m}=\cdots=x_{k,1}\cdots x_{k,m}
\end{equation*}
lying in the orthotope $\{\bx\in\nn^{k\times m}:x_{i,j}\leqslant X_j\}$ is precisely equal to the moment
\begin{equation*}
    M_{m,k}(\mathbf{X})\colonequals\sum_n\tau_m(n;\mathbf{X})^k,\quad\text{where}\quad\tau_m(n;\mathbf{X})\colonequals\mathop{\sum_{d_1\leqslant X_1}\cdots\sum_{d_m\leqslant X_m}}_{d_1\cdots d_m=n}1.
\end{equation*}
Note that $\tau_m(n;\mathbf{X})=0$ whenever $n>X_1\cdots X_m$, and thus the moment $M_{m,k}(\mathbf{X})$ is indeed well-defined.  Now, the number $M_{m,k}(\mathbf{X})$ was studied in the horizontal aspect by Munsch and Shparlinski~\cite{MSh}, who proved the existence of a constant $C_{m,\bc}>0$, depending at most on $m$ and $\bc\succ\bzero_m$, such that
\begin{align*}
    M_{m,2}(X^{c_1},\ldots,X^{c_m})\sim C_{m,\bc} X^{\|\bc\|}(\log X)^{(m-1)^2}\quad\text{as}\quad X\to\infty,
\end{align*}
where $\|\bc\|=|c_1|+\cdots+|c_m|$.  Conversely, in the vertical aspect, Mastrostefano~\cite{M2} showed that there exists a constant $D_k>0$ such that
\begin{equation*}
    M_{2,k}(X,X)\sim D_kX^2(\log X)^{2^k-k-1}\quad\text{as}\quad X\to\infty.
\end{equation*}
More generally, we may prove the following.
\begin{theorem}\label{thm:mainthm2}
    Fix integers $m,k\geqslant1$, and suppose that $\bc\in\Q_{>0}^m$.  Write
    \begin{equation*}
        q_{m,k,\bc}=V_{m,k,\bc}\prod_p(1-p^{-1})^{m^k}\sum_{n\geqslant0}\frac{1}{p^{n}}\binom{n+m-1}{m-1}^{\!\!k},
    \end{equation*}
    where, with $\cS_{m,k}=\{\bsigma\in\{0,1\}^{k\times m}:\|\bsigma_i\|=1\:\forall i\leqslant k\}$, we have written
    \begin{equation*}
        V_{m,k,\bc}=\vol\Big\{(v_{\bsigma})_{\bsigma\in\cS_{m,k}}\in\R_+^{\cS_{m,k}}:\sum_{\bsigma\in\cS_{m,k}}v_{\bsigma}\sigma_{i,j}=c_j\:\forall i\leqslant k\:\forall j\leqslant m\Big\}.
    \end{equation*}
    Then, there exists a polynomial $Q_{m,k,\bc}$, of degree precisely $m^k-(m-1)k-1$ and leading coefficient $q_{m,k,\bc}$, such that
    \begin{equation*}
        M_{m,k}(X^{c_1},\ldots,X^{c_m})=X^{\|\bc\|}Q_{m,k,\bc}(\log X)+o(X^{\|\bc\|-\vartheta_{m,k,\bc}})\quad\text{as}\quad X\to\infty,
    \end{equation*}
    for some $\vartheta_{m,k,\bc}>0$, where the implied constant depends at most on $m,k$, and $\bc$.
\end{theorem}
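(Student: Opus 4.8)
\emph{Strategy.} The plan is to realise $M_{m,k}(\mathbf X)$ as a $km$-fold Mellin--Perron integral of a multiplicative Dirichlet series and to read the main term off from a multidimensional residue; writing $c_j=a_j/b$ over a common denominator and setting $Y=X^{1/b}$ turns the ranges $x_{i,j}\leqslant X^{c_j}$ into $x_{i,j}\leqslant Y^{a_j}$ with $a_j\in\N$, so Theorem~\ref{thm:mainthm2} is an instance, for the system $\prod_jx_{1,j}=\cdots=\prod_jx_{k,j}$, of the general counting result announced in the abstract, and I sketch the argument directly for that system. The four steps are: (i) realise $M_{m,k}(\mathbf X)$ as such an integral of a series $D$; (ii) factor $D$ into $m^k$ zeta-factors times a correction holomorphic past the line $\Re=\tfrac12$; (iii) shift the contours past the polar hyperplanes and extract the main term; (iv) estimate the leftover integral and the subordinate residues.

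\textbf{Steps (i)--(ii).} With $\mathbf X=(X^{c_1},\dots,X^{c_m})$, replace each sharp condition $x_{i,j}\leqslant X^{c_j}$ by a smooth cut-off (so that all vertical integrals converge) and invert the $km$ Mellin transforms to obtain
\begin{equation*}
M_{m,k}(\mathbf X)=\frac1{(2\pi i)^{km}}\int\!\cdots\!\int\Big(\prod_{i=1}^{k}\prod_{j=1}^{m}X^{c_js_{i,j}}W_j(s_{i,j})\Big)D(\{s_{i,j}\})\prod_{i,j}\dif s_{i,j}+(\text{smoothing error}),
\end{equation*}
over lines $\Re s_{i,j}=\kappa$ with $\kappa$ large, where $W_j$ (the Mellin transform of the column-$j$ weight) is holomorphic for $\Re s>0$, has a simple pole of residue $1$ at $s=0$, and decays rapidly in vertical strips, while
\begin{equation*}
D(\{s_{i,j}\})=\sum_{\substack{(d_{i,j})\in\nn^{k\times m}\\ \prod_jd_{1,j}=\cdots=\prod_jd_{k,j}}}\prod_{i,j}d_{i,j}^{-s_{i,j}}=\prod_p\sum_{\substack{(e_{i,j})\in\Z_{\geqslant0}^{k\times m}\\ \sum_je_{1,j}=\cdots=\sum_je_{k,j}}}p^{-\sum_{i,j}e_{i,j}s_{i,j}}.
\end{equation*}
Each Euler factor equals $1+\sum_{\bsigma\in\cS_{m,k}}p^{-L_\bsigma(\{s_{i,j}\})}+(\text{terms of common row-sum}\geqslant2)$, where $L_\bsigma(\{s_{i,j}\})=\sum_{i,j}\sigma_{i,j}s_{i,j}$, since the exponent matrices of common row-sum $1$ are exactly the members of $\cS_{m,k}$. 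Dividing out $\prod_\bsigma(1-p^{-L_\bsigma})^{-1}$ and checking that the surviving local terms are $p^{-w}$ with $\Re w$ bounded away from $1$ on compacta of $\fD\colonequals\{\Re L_\bsigma>\tfrac12\ \forall\bsigma\in\cS_{m,k}\}$ gives
\begin{equation*}
D(\{s_{i,j}\})=\Big(\prod_{\bsigma\in\cS_{m,k}}\zeta\big(L_\bsigma(\{s_{i,j}\})\big)\Big)G(\{s_{i,j}\}),
\end{equation*}
with $G$ holomorphic and bounded on compacta of $\fD$. On the diagonal $s_{i,j}=t_i$ one has $D=\sum_n\tau_m(n)^kn^{-(t_1+\cdots+t_k)}=\zeta(t_1+\cdots+t_k)^{m^k}G$, so $G$ is constant, with value $H(1)$, on the stratum $S\colonequals\{s_{i,j}=t_i,\ t_1+\cdots+t_k=1\}$, where $H$ is defined by $\sum_{n\geqslant1}\tau_m(n)^kn^{-s}=\zeta(s)^{m^k}H(s)$, so that $H(1)=\prod_p(1-p^{-1})^{m^k}\sum_{n\geqslant0}\binom{n+m-1}{m-1}^{\!\!k}p^{-n}$.

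\textbf{Step (iii).} The polar hyperplanes of the integrand are $L_\bsigma=1$ ($\bsigma\in\cS_{m,k}$) and $s_{i,j}=0$. The $m^k$ hyperplanes $L_\bsigma=1$ meet precisely in $S$, which has dimension $k-1$ because the forms $\{L_\bsigma\}$ span a space of dimension $km-k+1$ (equivalently, $\cS_{m,k}$ affinely spans the $k(m-1)$-dimensional affine space of real $k\times m$ matrices all of whose rows sum to $1$). Moving the $km$ contours down past these poles --- carried out iteratively, so as to disentangle the degenerate simultaneous crossing along $S$ --- and summing residues, the dominant contribution is $X^{\|\bc\|}$ times a polynomial in $\log X$ of degree $m^k-(km-k+1)=m^k-(m-1)k-1$; the exponent is $\|\bc\|$ because $\sum_{i,j}c_js_{i,j}=\big(\sum_jc_j\big)\big(\sum_it_i\big)=\|\bc\|$ on $S$. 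By the standard correspondence between iterated Mellin--Barnes residues and polytope volumes --- write $X^{\sum_{i,j}c_js_{i,j}}=\prod_\bsigma X^{v_\bsigma L_\bsigma}$, which forces $\sum_\bsigma v_\bsigma\sigma_{i,j}=c_j$ --- the leading coefficient is $G|_S$ times $\vol\{(v_\bsigma)_{\bsigma\in\cS_{m,k}}\in\R_+^{\cS_{m,k}}:\sum_\bsigma v_\bsigma\sigma_{i,j}=c_j\ \forall i\leqslant k\ \forall j\leqslant m\}$, i.e.\ $H(1)\,V_{m,k,\bc}=q_{m,k,\bc}$. Since $\bc\succ\bzero_m$, this polytope is nonempty and of full dimension $m^k-(m-1)k-1$ in its affine hull, whence $q_{m,k,\bc}>0$ and the degree is exactly as asserted.

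\textbf{Step (iv) and the main obstacle.} Once the contours have been pushed past $S$ (each $\Re L_\bsigma$ a little below $1$ but above $\tfrac12$), the surviving integral converges absolutely by the vertical decay of the $W_j$ together with convexity bounds for $\zeta$, so it is $O(X^{\|\bc\|-\vartheta})$ for some $\vartheta=\vartheta_{m,k,\bc}>0$; the residues along the lower-dimensional strata other than $S$ are likewise $O(X^{\|\bc\|-\vartheta})$, and the smoothing error is negligible after optimising the mollification width, which proves the theorem. The main obstacle is the multidimensional residue in step (iii): all $m^k$ polar hyperplanes pass through the $(k-1)$-dimensional stratum $S$, so the leading singularity is highly non-generic, and organising the iterated residue --- essentially a combinatorial decomposition dictated by the polytope $\cS_{m,k}$ --- is what pins down the exact degree $m^k-(m-1)k-1$ and rules out a subordinate stratum contributing a term of size $X^{\|\bc\|}(\log X)^{\ast}$. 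Verifying holomorphy and the requisite growth of $G$ throughout $\fD$, and the identity $G|_S=H(1)$, are the remaining technical points.
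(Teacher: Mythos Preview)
Your approach is essentially the paper's: both realise $M_{m,k}$ through the multiple Dirichlet series $D(\{s_{i,j}\})$, factor it as $\prod_{\bsigma\in\cS_{m,k}}\zeta(L_\bsigma)\cdot G$ with $G$ holomorphic past the polar locus, compute the rank of $\{L_\bsigma:\bsigma\in\cS_{m,k}\}$ to be $km-k+1$, and identify the leading coefficient as $H(1)\cdot V_{m,k,\bc}$. The difference is packaging. The paper wraps your steps~(iii)--(iv) into a citation of two theorems of La~Bret\`eche (stated here as Lemmata~\ref{lem:dlB}--\ref{lem:dlB2}), and the actual work in \S\ref{sec:demo} is verifying their hypotheses: the growth bound on $G$ via the Vinogradov--Korobov estimate, the condition that $\bb$ lies in the positive cone generated by $\cR_\bgamma(\ba)$, and the explicit basis computation giving $\rank\cR_\bgamma(\ba)=(m-1)k+1$. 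Your affine-span argument for the rank is cleaner than the paper's explicit basis $\cB$, and your diagonal specialisation $s_{i,j}=t_i$ to identify $G|_S=H(1)$ is a nice shortcut for what the paper obtains as $g_{\bgamma,\cN}(\ba,\bzero_{\boldm})$ via~\eqref{eqn:gchi}--\eqref{eqn:gchigamma}.

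The one genuine soft spot is your step~(iii). What you call the ``standard correspondence between iterated Mellin--Barnes residues and polytope volumes'' is precisely the content of La~Bret\`eche's Th\'eor\`eme~2, and it is \emph{not} routine when all $m^k$ polar hyperplanes meet along a stratum $S$ of positive dimension $k-1$: one must show that no iterated residue along a proper subfamily of the $L_\bsigma=1$ produces a competing $X^{\|\bc\|}(\log X)^{\ast}$ term, and that the surviving integral after the full shift really picks up $V_{m,k,\bc}$ rather than some signed combination of face volumes. You correctly flag this as the main obstacle, but it cannot be waved through; you should either cite La~Bret\`eche's result and check its hypotheses (as the paper does), or supply the combinatorics of the iterated residue in full. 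Everything else in your outline is correct.
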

In the case $k=2$, the result above makes explicit the above mentioned result of Munsch and Shparlinski \cite{MSh}. In particular, if we additionally take $\bc=\bone_m$, then following an argument of Harper, Nikeghbali, and Radziwi\l\l\:\cite{HNR}, we see that
\begin{equation*}
    V_{m,2,\bone_m}=\binom{2(m-1)}{m-1}\frac{B_m}{m^{m-1}},
\end{equation*}
where $B_m$ is the $(m-1)^2$-dimensional volume of the Birkhoff polytope $\cB_m\subset\R_+^{m^2}$, for which an asymptotic formula has been established by Canfield and McKay \cite{birkhoff}.  In general, $V_{m,k,\bc}$ represents the $m^k-(m-1)k-1$ dimensional volume of a polytope embedded in an $m^k$ dimensional space.

We note that the method used to establish Theorem~\ref{thm:mainthm2} can also be applied to study the asymptotic behaviour of the more general expression
\begin{equation*}
    M_{m,k,\ell}(\mathbf{X})\colonequals\sum_n\tau_m(n^\ell;\mathbf{X})^k,
\end{equation*}
which is related to the distribution of solutions to the Diophantine equation
\begin{align*}
    x_{1,1}\dots x_{1,m}=\cdots=x_{k,1}\cdots x_{k,m}=y^{\ell}.
\end{align*}
To our knowledge, all previous works in this direction pertain to the case where $k=1$ or $\ell=1$.  In particular, the number $M_{2,1,\ell}(X,X)$ was studied by Tolev~\cite{T}, and the number $M_{3,1,3}(X,X)$, with additional coprimality conditions, by La~Bret\`{e}che~\cite{dlB98}, Fouvry \cite{fouvry}, and Heath-Brown and Moroz~\cite{HBM}.  More recently, it was shown by La~Bret\`{e}che, Kurlberg, and Shparlinski~\cite{dlBKS} that there exists a constant $\vartheta_{m,\ell}>0$, depending at most on $m$ and $\ell$, such that\begin{align}\label{eqn:dlBPSh}
    M_{m,1,\ell}(X,\ldots,X)= X^{m/\ell}Q_{m,\ell}(\log X) + O(X^{m/\ell-\vartheta_{m,\ell}})\quad\text{as}\quad X\to\infty,
\end{align}where $Q_{m,\ell}$ is a real polynomial with\begin{align*}
    \deg Q_{m,\ell}=\binom{\ell+m-1}{m-1}-m.
\end{align*} 
We further generalise their result in the following.
\begin{theorem}\label{thm:mainthm3}
    Fix integers $m,k,\ell\geqslant1$, and suppose that $\bc\in\Q_{>0}^m$.  Write
    \begin{equation*}
        q_{m,k,\ell,\bc}=V_{m,k,\ell,\bc}\prod_p(1-p^{-1})^{\binom{\ell+m-1}{m-1}^k}\sum_{n\geqslant0}\frac{1}{p^n}\binom{\ell n+m-1}{m-1}^{\!\!k}
    \end{equation*}
    where, with $\cS_{m,k,\ell}=\{\bsigma\in\{0,1,\ldots,\ell\}^{k\times m}:\|\bsigma_i\|=\ell\:\forall i\leqslant k\}$, we have written
    \begin{equation*}
        V_{m,k,\ell,\bc}=\vol\Big\{(v_{\bsigma})_{\bsigma\in\cS_{m,k,\ell}}\in\R_+^{\cS_{m,k,\ell}}:\sum_{\bsigma\in\cS_{m,k,\ell}}v_{\bsigma}\sigma_{i,j}= c_j\:\forall i\leqslant k\:\forall j\leqslant m\Big\}.
    \end{equation*}
    Then, there exists a polynomial $Q_{m,k,\ell,\bc}$, of degree precisely $\binom{\ell+m-1}{m-1}^{\!\!k}-(m-1)k-1$ and leading coefficient $q_{m,k,\ell,\bc}$, such that
    \begin{align*}
        M_{m,k,\ell}(X^{c_1},\ldots,X^{c_m})=X^{\|\bc\|/\ell}Q_{m,k,\ell,\bc}(\log X)+o(X^{1-\vartheta_{m,k,\ell,\bc}})\quad\text{as}\quad X\to\infty,
    \end{align*}
    for some $\vartheta_{m,k,\ell,\bc}>0$, where the implied constant depends at most on $m,k,\ell$, and $\bc$.
\end{theorem}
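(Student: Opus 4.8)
The plan is to adapt the argument by which we proved Theorem~\ref{thm:mainthm2}, carrying the power $\ell$ along throughout. First I would observe that the moment is literally a solution count,
\[
M_{m,k,\ell}(\mathbf{X})=\#\big\{(\bx,y)\in\nn^{k\times m}\times\nn:\ x_{i,1}\cdots x_{i,m}=y^{\ell}\ \ (1\le i\le k),\ \ x_{i,j}\le X^{c_j}\ \forall\,i,j\big\},
\]
each admissible $\bx$ being counted once, namely for the unique $y$ with $y^{\ell}=\prod_{j}x_{i,j}$. To this I attach the Dirichlet series in the $km$ variables $\mathbf{s}=(s_{i,j})$,
\[
D(\mathbf{s})=\sum_{\substack{\bx\in\nn^{k\times m}\\ x_{1,1}\cdots x_{1,m}=\cdots=x_{k,1}\cdots x_{k,m}\ \text{is an }\ell\text{-th power}}}\ \prod_{i,j}x_{i,j}^{-s_{i,j}},
\]
whose Euler factor at $p$ is obtained by grouping the exponent matrices at $p$ according to their common scaled row sum $b=v_{p}(y)$. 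The combinatorial core is that the monoid of admissible exponent matrices --- the nonnegative integer $k\times m$ matrices all of whose row sums are a single common multiple of $\ell$ --- is generated precisely by $\cS_{m,k,\ell}$ (split each row of such a matrix greedily into compositions of $\ell$). Writing $\langle\bsigma,\mathbf{s}\rangle=\sum_{i,j}\sigma_{i,j}s_{i,j}$, this yields a factorisation
\[
D(\mathbf{s})=\Big(\prod_{\bsigma\in\cS_{m,k,\ell}}\zeta\big(\langle\bsigma,\mathbf{s}\rangle\big)\Big)H(\mathbf{s}),
\]
in which $H$ is an Euler product that converges absolutely, and is nonvanishing, on a region of the shape $\Re\langle\bsigma,\mathbf{s}\rangle>1-\delta$ for every $\bsigma$; and a direct computation shows that $H$ evaluated at the distinguished point where all $\langle\bsigma,\mathbf{s}\rangle=1$ equals $\prod_{p}(1-p^{-1})^{\binom{\ell+m-1}{m-1}^{k}}\sum_{n\ge0}p^{-n}\binom{\ell n+m-1}{m-1}^{k}$, the inner sum being the local count of exponent configurations with $v_{p}(y)=n$.

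Next I would insert a $km$-fold Perron--Mellin formula for $M_{m,k,\ell}(\mathbf{X})$ and shift the contours towards the origin, exactly as in the proof of Theorem~\ref{thm:mainthm2}. The dominant contribution is a residue at the vertex of the hyperplane arrangement formed by the $\{\langle\bsigma,\mathbf{s}\rangle=1\}$ together with the coordinate hyperplanes $\{s_{i,j}=0\}$: restricting to the locus $s_{i,j}=t_{i}$ (depending only on the row) collapses every form $\langle\bsigma,\cdot\rangle$ to $\ell(t_{1}+\cdots+t_{k})$ while turning $\prod_{i,j}(X^{c_{j}})^{s_{i,j}}$ into $X^{\|\bc\|(t_{1}+\cdots+t_{k})}$, which pins the main power at $X^{\|\bc\|/\ell}$. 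A by-now-standard residue computation --- replace each $\zeta(w)$ by $(w-1)^{-1}$ and recognise the remaining rational integral as a Laplace transform --- then produces a main term $X^{\|\bc\|/\ell}Q_{m,k,\ell,\bc}(\log X)$, in which the leading coefficient of $Q_{m,k,\ell,\bc}$ is the value of $H$ at the distinguished point times the volume of the slice $\{(v_{\bsigma})\in\R_{+}^{\cS_{m,k,\ell}}:\sum_{\bsigma}v_{\bsigma}\sigma_{i,j}=c_{j}\}$, that is, times $V_{m,k,\ell,\bc}$; together these give $q_{m,k,\ell,\bc}$. The degree of $Q_{m,k,\ell,\bc}$ equals the dimension of that slice, which one computes to be $\binom{\ell+m-1}{m-1}^{k}-k(m-1)-1$ by noting that its $km$ defining equations carry exactly $k-1$ redundancies (for each $i$, summing the equations over $j$ gives the single identity $\ell\sum_{\bsigma}v_{\bsigma}=\|\bc\|$). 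Finally, bounding the shifted contour integral by an error term of the asserted form --- a power saving over $X^{\|\bc\|/\ell}$ --- uses only the classical zero-free region and polynomial growth of $\zeta$ on vertical lines, together with the uniform control of $H$ afforded by its region of absolute convergence.

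The step I expect to be the main obstacle is the geometric bookkeeping around the distinguished vertex: verifying that $\cS_{m,k,\ell}$ generates the relevant monoid, establishing that the resulting hyperplane arrangement produces a pole of the \emph{exact} order needed --- ruling out accidental coincidences among the forms $\langle\bsigma,\cdot\rangle$, which is what makes $\deg Q_{m,k,\ell,\bc}$ come out to be \emph{precisely} $\binom{\ell+m-1}{m-1}^{k}-k(m-1)-1$ and not merely at most that --- and then carrying out the iterated residue carefully enough to see $V_{m,k,\ell,\bc}$ emerge with exactly the right normalisation. By contrast the analytic part --- Perron truncation, contour shifting, $\zeta$-estimates for the error, and absolute convergence of $H$ --- is routine and transfers with only cosmetic changes from the cases $\ell=1$ (Theorem~\ref{thm:mainthm2}) and $k=1$ (the result of de~la~Bret\`{e}che, Kurlberg, and Shparlinski), which are in turn recovered as consistency checks.
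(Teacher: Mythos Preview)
Your approach is correct and closely parallels the paper's. The paper differs in two organisational respects: it works in $km+1$ variables, adjoining a coordinate for $y$ via $\bgamma=\bigoplus_{i\le k}\bone_m\oplus(\ell)$ and $\bb=\bigoplus_{i\le k}\bc\oplus(\|\bc\|/\ell)$, and it then invokes the general Theorems~\ref{thm:mainresult} and~\ref{thm:mostgen} (which package La~Bret\`eche's machinery) rather than carrying out the Perron--contour-shift argument by hand as you propose. On the rank computation, your redundancy count --- summing the constraints over $j$ for each $i$ --- exhibits $k-1$ dependencies and hence shows the rank is \emph{at most} $(m-1)k+1$; the reverse inequality is precisely the obstacle you flag, and the paper supplies it by a dual argument: it writes down $(m-1)k+1$ explicit vectors in $\cS_{m,k,1}$ (those $\be_{1,w_1}+\cdots+\be_{k,w_k}$ with at most one $w_i\neq1$), checks their linear independence directly, and then proves by induction on $\#\{i:w_i\neq1\}$ that they span $\cS_{m,k,1}$, hence also $\cS_{m,k,\ell}$. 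The remaining ingredients in your outline --- monoid generation by $\cS_{m,k,\ell}$, the evaluation of the local factor as $\sum_{n\ge0}p^{-n}\binom{\ell n+m-1}{m-1}^k$, and the identification of the slice volume with $V_{m,k,\ell,\bc}$ --- match the paper's treatment.
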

Taking $k=1$ and $\bc=\bone_m$ in the above result, we recover the estimate \eqref{eqn:dlBPSh}, though our expression for the area $V_{m,1,\ell,\bone_m}$ is written in a slightly different manner to that given in the article \cite{dlBKS}.  Note that $V_{m,k,\ell,\bc}$ is the $\binom{\ell+m-1}{m-1}^k-(m-1)k-1$ dimensional volume of a polytope embedded in a $\binom{\ell+m-1}{m-1}^k$ dimensional space.\newline

\section{Organisation and strategy.}
As a prerequisite for the demonstrations of our main assertions, Theorems~\ref{thm:mainthm2}~and~\ref{thm:mainthm3}, we shall prove a more general result, pertaining to the distribution of solutions to more general systems of Diophantine equations.  This result is stated in \S\,\ref{sec:thms}, with Theorem~\ref{thm:mainresult} being the most general result of this work, and  Theorem~\ref{thm:mostgen} being, under some additional conditions, the strongest result.  For a summary of the notations required for the proof, we refer the reader to \S\,\ref{sec:notes}.  In \S\,\ref{sec:prelim}, we outline the main tools for our demonstrations, which are presented in \S\,\ref{sec:demo}.  In particular, \S\,\ref{sec:labreteche} is dedicated to a discussion of two results of La~Bret\`eche \cite{dlB,dlBcompter}, which relate the statistics of an arithmetic function $f$ to the study of the multiple Dirichlet series associated to $f$.  We then, in \S\,\ref{sec:mds}, recall some tools for studying certain types of multiple Dirichlet series, which are essential in our application of the work of La~Bret\`eche.  Lastly, in \S\,\ref{sec:rem} we provide further remarks on our results, and suggest some problems as topics of potential future works.\newline

\subsection{Notations and conventions.}\label{sec:notes}
Firstly, we let $\mathbf{P}$, $\nn$, $\zz$, $\rr$, and $\cc$ denote the set of primes, natural numbers, integers, reals, and complex numbers, respectively.  Moreover, we adopt the convention that $\Z_+$ and $\R_+$ contain $0$, while $\N$ does not.  The letter $p$, with or without a subscript, will always denote a prime.  Similarly, any sum or product indexed by $p$ is intended to run over the primes.  Any other sum or product, unless otherwise obvious, is assumed to run over the natural numbers.  We shall use $\varepsilon$ to denote an arbitrarily small positive constant, which may vary in actual value from line to line.  We will additionally make use of both the usual Landau and Vinogradov notations $O,o,\sim,\ll,\gg$ (see e.g.\:\cite{davenport,montgomery}), as well as the following standard conventions:
\begin{itemize}[topsep=0pt]\setlength\itemsep{-0.3em}
    \item $\#\cS$ denotes the cardinality of a finite set $\cS$.    
    \item $\vecspan\cS$ denotes the span of the set $\cS$ over $\R$.
    \item $\vol\cT$ denotes the Lebesgue volume of $\cT$ in an appropriate embedded space.
\end{itemize}
In this article, we follow the convention that the volume of a region of dimension zero, i.e.\:a point, is 1.  Moreover, the volume of a line segment is the length of that line segment.  We also introduce the following notations, some of which are non-standard:
\begin{itemize}[topsep=0pt]\setlength\itemsep{-0.3em}
    \item $\cS^{\boldm}=\cS^{m_1}\oplus\cdots\oplus\cS^{m_k}$ denotes the set of $\br=\br_1\oplus\cdots\oplus\br_k$ with $\br_i\in\cS^{m_i}$.
    \item $r_{i,j}$ denotes the $j$-th ordinate of $\br_i$.
    \item $\br_1$, $\dots$, $\br_k$ are parts of $\br$.
    \item $\cI^{\boldm}$ denotes the set of integer pairs $(i,j)$ where $1\leqslant i\leqslant k$ and $1\leqslant j\leqslant m_i$.
    \item $\cH_{\ba}^{\boldm}$ denotes the subset of $\bs\in\C^{\boldm}$ for which $\fR s_{i,j}>a_{i,j}$ for all $i,j\in\cI^{\boldm}$.
    \item $\fR\bs$, where $\bs\in\C^{\boldm}$ is the $\bsigma\in\R^{\boldm}$ with $\fR s_{i,j}=\sigma_{i,j}$ for all $(i,j)\in\cI^{\boldm}$.
    \item $\fI\bs$ is defined similarly to $\fR\bs$.
    \item $\bone_{\boldm}=\bone_{m_1}\oplus\cdots\oplus\bone_{m_k}$, where $\bone_n$ denotes the $\by\in\C^n$ with $y_j=1$ for all $j\leqslant n$.
    \item $\bzero_{\boldm}=\bzero_{m_1}\oplus\cdots\oplus\bzero_{m_k}$, where $\bzero_n$ denotes the zero-tuple in $\C^n$.
    \item $\by^{\bfxi}=y_1^{\xi_1}\cdots y_n^{\xi_n}$ for any $\by,\bfxi\in\R^n$.
    \item $Y^{\bfxi}=(Y^{\xi_1},\ldots,Y^{\xi_n})$ for any $Y\in\R$ and $\bb\in\R^n$.
    \item $\cY^{\cX}$ denotes the set of sequences $(y_{\bx})_{\bx\in\cX}$ with $y_{\bx}\in\cY$ for all $\bx\in\cX$.
    \item $\langle\ba,\bb\rangle$ denotes the standard inner product on $\C^{\boldm}$, that is, the scalar product.
    \item $\|\ba\|$ denotes the Manhattan-norm of $\ba\in\R^{\boldm}$.
    \item $\cL^{\boldm}(\C)$ denotes the space of linear forms $\ell:\matc\to\C$.
    \item $\cL^{\boldm}_+(\C)$ denotes the subset of $\ell\in\cL^{\boldm}(\C)$ for which $\ell|_{\rr_+}$ maps to $\rr_+$.
    \item $\cE=\{\be_{i,j}\}$ denotes the standard basis for $\C^{\boldm}$ over $\C$.
    \item $\cE^*=\{\be_{i,j}^*\}$ denotes the standard dual basis in $\cL^{\boldm}(\C)$.
\end{itemize}
We will additionally write $\ba\succ\bb$ for some $\ba,\bb\in\R^{\boldm}$ if $a_{i,j}>b_{i,j}$ for all $(i,j)\in\cI^{\boldm}$, and we similarly define the relations $\ba\succeq\bb$, $\ba\prec\bb$, and $\ba\preceq\bb$.\newline

\subsection{A more general setting.}
In the following, we fix an integer $k\geqslant2$, suppose that $\boldm=(m_1,\dots,m_k)\in\N^k$, and for each $i\leqslant k$, suppose that $\bgamma_i\in\N^{m_i}$.  For convenience, we write $\bgamma=\bgamma_1\oplus\cdots\oplus\bgamma_k$, and denote the set of all such $\bgamma$ by $\matn$, and say that $\bgamma$ is an $\boldm$-tuple.  Moreover, we let $\gamma_{i,j}$ denote the $j^{th}$ ordinate of $\bgamma_i$, and for notational convenience, we let $\cI^{\boldm}$ denote the set of all indices $(i,j)$ such that $1\leqslant i\leqslant k$ and $1\leqslant j\leqslant m_i$.  As our method allows for it, we shall study a more general analog of multiplicative energy, given by
\begin{align*}
\energy_{\bgamma,\mathscr{N}}(H^{\bb})\colonequals\#\big\{\bx\in\mathfrak{B}^{\boldm}(H^{\bb})\cap\mathscr{N}:\bx_1^{\bgamma_1}=\cdots=\bx_k^{\bgamma_k}\big\},
\end{align*}
where $\mathscr{N}\subset\N^{\boldm}$ is closed under the Hadamard product, and
\begin{equation*}
    \mathfrak{B}^{\boldm}(H^{\bb})\colonequals\{\bx\in\nn^{\boldm}:x_{i,j}\leqslant H^{b_{i,j}}\:\forall(i,j)\in\mathscr{I}^{\boldm}\}
\end{equation*}
for some fixed $\bb\in\rr_+^{\boldm}$.  Now, the reason why we consider the solutions in the box $\fB^{\boldm}(H^{\bb})$, rather than a more arbitrary bounded subset of $\N^{\boldm}$, is due to the fact that the method we use to study $\energy_{\bgamma,\mathscr{N}}(H^{\bb})$ produces an asymptotic formula in the most interesting, balanced case where\begin{align}\label{eqn:N}
    \langle\bgamma_1,\bb_1\rangle=\cdots=\langle\bgamma_k,\bb_k\rangle=N
\end{align} for some rational number $N$.  Here, and in the remainder of the article, we let $\langle\cdot,\cdot\rangle:\fV\times\fV\to\R$ denote the scalar product on $\fV=\R^m$ or $\fV=\R^{\boldm}$, whichever is appropriate.  Before moving on, we briefly note that the set of solutions to
\begin{equation}\label{eqn:x1xk}
    \bx_1^{\bgamma_1}=\cdots=\bx_k^{\bgamma_k}
\end{equation}
can be interpreted as the collection of integral points on the affine variety defined by \eqref{eqn:x1xk}. This is in some way related to Manin's conjecture (cf.\:\cite{FMT}), which concerns the number of rational points on a suitably nice Zariski-open subset $\mathscr{U}$ of a projective variety $\mathscr{X}$ defined over a number field $\K$.  In particular, Manin's conjecture predicts that, for an appropriate height function $\mathfrak{h}:\mathscr{X}\to\rr_+$, we should have$$\#\{\bx\in \mathscr{U}(\K) \colon \mathfrak{h}(\bx) \leqslant H\} \sim cH^{A}(\log H)^{B}\quad\text{as}\quad H\to\infty,$$ for some constants $A$, $B$, and $c$, depending on the geometry of $\mathscr{X}$.  This conjecture has been confirmed with respect to several broad families of varieties, and we refer the interested reader to the articles \cite{FMT,BT,Santens,dlBB1,dlBB2,brow,brownheathbrown,blomer} for more information in this direction.  Note that, in this context, $\energy_{\gamma,\N^{\boldm}}(H^{\bone_{\boldm}})$ is related to the number of integral points, of bounded height, on the singular toric variety\begin{align*}
       \big\{\bx \in \A^{\boldm} \colon \bx_{i_1}^{\bgamma_{i_1}}-\bx_{i_2}^{\bgamma_{i_2}}=0,\:\forall i_1<i_2\leqslant k\big\},
    \end{align*}with respect to the height function \begin{align*}
        \mathfrak{h}:\bx\mapsto\max_{(i,j)\in \cI^{\boldm}}|x_{i,j}|.
    \end{align*}
    While our results in \S\,\ref{sec:thms} will pertain only to the case where $\mathscr{N}\subseteq\N^{\boldm}$, a simple combinatorial argument allows us to extend them to subsets of $\Z^{\boldm}$, and thus be related to the above (cf.\:\S\,\ref{sec:singularmat}).\newline

\subsection{Trivial bounds.}  In this section, for the sake of convenience, we only consider the case where $\mathscr{N}=\N^{\boldm}$, wherein we write $\energy_{\bgamma,\N^{\boldm}}(H^{\bb})=\energy_{\bgamma}(H^{\bb})$, though it is clear that similar bounds hold in general.  Firstly, we note that
\begin{align}\label{eqn:upperbound}
    \energy_{\bgamma}(H^{\bb})&\leqslant\min_{t\leqslant k}\sum_{\bx_t\in\mathfrak{B}^{m_t}(H^{\bb_t})}\prod_{\substack{i=1\\i\neq t}}^k\#\big\{\bx_i\in\N^{m_i}:\bx_i^{\bgamma_i}=\bx_t^{\bgamma_t}\big\}\notag\\&\leqslant\min_{t\leqslant k}\sum_{\bx_t\in\mathfrak{B}^{m_t}(H^{\bb_t})}\max_{i\neq t}\#\big\{\bd\in\N^{\|\bgamma_i\|}:d_1\cdots d_{\|\bgamma_i\|}=\bx_t^{\bgamma_t}\big\}^{k-1}\notag\\&\lle\min_{t\leqslant k} H^{\|\bb_t\|+\varepsilon},
\end{align}
by virtue of the standard bound for the divisor function (e.g.\:(1.81) in \cite{IK}), and the fact that $\#\fB^{m_t}(H^{\bb_t})\sim H^{\|\bb_t\|}$ as $H\to\infty$.  Note that, in the important case where $\bb_i=\bone_{m_i}$, we have $\|\bb_i\|=m_i$.

A trivial lower-bound can easily be derived for the case where $\bgamma$ satisfies~\eqref{eqn:N} and $\alpha\bb\in\N^{\boldm}$ for some $\alpha\in\N$, that is to say when $\bb$ has rational ordinates. These conditions naturally appear in many number theoretical applications. Trivially,
\begin{equation*}
     \energy_{\bgamma}(H^{\bb})\geqslant H^{1/\alpha}+O(1),
\end{equation*}
which follows from the fact that $\bx=h^{\alpha\bb}$ is a solution to \eqref{eqn:x1xk} for all $h\leqslant H^{1/\alpha}$.  To improve this bound, we must firstly develop some nomenclature.  Let $\mu_{\bgamma,\bb}$ be the greatest number $\mu$ such that there exist tuples $\bfeta^1,\ldots,\bfeta^{\mu}\in\{0,1\}^{\boldm}\backslash\{\bzero_{\boldm}\}$, satisfying $\bfeta^1+\cdots+\bfeta^{\mu}=\bone_{\boldm}$, such that
\begin{equation}\label{eqn:mugamma99}
    \langle\bgamma_1,\bfeta^u_1*\bb_1\rangle=\cdots=\langle\bgamma_k,\bfeta_k^u*\bb_k\rangle=N_u\quad\text{for all}\quad u\leqslant\mu,
\end{equation} 
and some fixed $N_1,\ldots,N_\mu\in\Q$, where $*$ denotes the Hadamard product.  Note that we necessarily have $N_1+\cdots+N_\mu=N$, where $N$ is as in~\eqref{eqn:N}.  Moreover, in the important case where $\bb=\bone_{m_1}\oplus\cdots\oplus\bone_{m_k}$, clearly $\mu_{\bgamma,\bone_{\boldm}}$ is precisely the maximal possible partitions of the coordinates of $\bgamma$ to collections of subsets with equal sums (cf.\:\S\,1.2 of \cite{AfCo}).  With these notations, it is clear that$$\bx=d_1^{\bfeta^1*\alpha\bb}*\cdots*d_{\mu_{\bgamma,\bb}}^{\bfeta^{\mu_{\bgamma,\bb}}*\alpha\bb}\in\N^{\boldm}$$ is a solution to \eqref{eqn:x1xk} for any natural numbers $d_1,\ldots,d_{\mu_{\bgamma,\bb}}\leqslant H^{1/\alpha}$.  Therefore, we obtain the stronger lower bound
\begin{align}\label{eqn:lowerbound}
    \energy_{\bgamma}(H^{\bb})&\geqslant\#\big\{d_1^{\bfeta^1*\alpha\bb}*\cdots*d_{\mu_{\bgamma,\bb}}^{\bfeta^{\mu_{\bgamma,\bb}}*\alpha\bb}:d_1,\ldots,d_{\mu_{\bgamma,\bb}}\leqslant H^{1/\alpha}\big\}\notag\\
    &=H^{\mu_{\bgamma,\bb}/\alpha}+O\big(H^{(\mu_{\bgamma,\bb}-1)/\alpha}\big).
\end{align}
In fact, as we will see in \S\,\ref{sec:nonexplicit}, the bound \eqref{eqn:lowerbound} is of the correct order of magnitude for certain choices of $\bgamma$ and $\bb$.  Moreover, comparing~\eqref{eqn:upperbound} and~\eqref{eqn:lowerbound}, it is clear that $1\leqslant\mu_{\bgamma,\bb}\leqslant\alpha\|\bb_i\|$ for all $i \leqslant k$. 
With respect to these bounds, our main objective in the remainder of this paper is to make improvements for particular cases of $\bgamma$.\newline 

\subsection{The main contention.}\label{sec:thms}
Our results, as with those existing in the literature, relate the study of the number $\energy_{\gamma,\mathscr{N}}(H^{\bb})$ to the study of the set
\begin{equation*}
    \cR_{\bgamma}\colonequals\{\br\in\Z_+^{\boldm}:\langle\bgamma_1,\br_1\rangle=\cdots=\langle\bgamma_k,\br_k\rangle\}
\end{equation*}
of solutions to the associated additive Diophantine equation.  Our first result provides a uniform treatment of all multiplicative Diophantine equations of the form \eqref{eqn:x1xk}.
\begin{theorem}\label{thm:mainresult}
    Fix $\boldm\in\N^k$, and suppose that $\bgamma\in\N^{\boldm}$ and $\bb\in\R_{>0}^{\boldm}$.  With $\cR_{\bgamma}$ defined as above, suppose that $\mathscr{N}\subseteq\N^{\boldm}$ is closed under the Hadamard product, and such that for all $\br\in\mathscr{R}_{\bgamma}$ the product
    \begin{equation}\label{eqn:notfriable}
        \prod_p\frac{p^\sigma-1}{p^\sigma-\psi_{\mathscr{N}}(p^{\br})}
    \end{equation}
    is absolutely convergent and non-zero for some $\sigma\in(0,1)$, where $\psi_{\mathscr{N}}:\N^{\boldm}\to\{0,1\}$ denotes the characteristic function of $\mathscr{N}$.  Suppose moreover that $\ba\succ\bzero_{\boldm}$ has real ordinates and is such that $\langle\ba,\br\rangle\geqslant1$ for all non-zero $\br\in\cR_{\bgamma}$, and write
    \begin{equation*}
        \kappa_{\bgamma}(\ba)\colonequals\#\cR_{\bgamma}(\ba)-\rank\cR_{\bgamma}(\ba)\quad\text{where}\quad\cR_{\bgamma}(\ba)\coloneqq\{\br\in\cR_{\bgamma}:\langle\ba,\br\rangle=1\}.
    \end{equation*}
    Then, there exists a polynomial $Q$, of degree not exceeding $\kappa_{\bgamma}(\ba)$, such that
    \begin{equation}\label{eqn:igorcomment}
        \energy_{\bgamma,\mathscr{N}}(H^{\bb})=H^{\langle\ba,\bb\rangle}Q(\log H)+o(H^{\langle\ba,\bb\rangle-\vartheta})\quad\text{as}\quad H\to\infty
    \end{equation}
    for some $\vartheta>0$, depending at most on $\ba,\bb$, and $\bgamma$.
\end{theorem}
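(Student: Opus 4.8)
The plan is to reduce the counting problem to the analytic study of a multiple Dirichlet series and then invoke the Tauberian machinery of La~Bret\`eche discussed in \S\,\ref{sec:labreteche}. First I would rewrite the quantity $\energy_{\bgamma,\mathscr{N}}(H^{\bb})$ in terms of the restricted divisor-type function $g(n)\colonequals\#\{\bx\in\mathscr{N}:\bx_1^{\bgamma_1}=\cdots=\bx_k^{\bgamma_k}=n\}$, so that $\energy_{\bgamma,\mathscr{N}}(H^{\bb})$ becomes a sum of $g$ against the indicator of the box $\mathfrak{B}^{\boldm}(H^{\bb})$. Because $g$ is multiplicative and supported on those $n$ whose prime-factorisation exponents lie in $\cR_{\bgamma}$, its associated (single or multiple) Dirichlet series factors as an Euler product, and a local computation at each prime $p$ identifies the $p$-factor as $\sum_{\br\in\cR_{\bgamma}}\psi_{\mathscr{N}}(p^{\br})\,p^{-\langle\bs,\br\rangle}$ for an appropriate vector variable $\bs\in\C^{\boldm}$. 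The hypothesis \eqref{eqn:notfriable} is exactly what is needed to compare this Euler product, after pulling out a product of zeta-factors indexed by a spanning set of $\cR_{\bgamma}(\ba)$, against a holomorphic and bounded correction factor on a slightly larger region; this is where $\sigma\in(0,1)$ and the absolute convergence and non-vanishing come in.

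Next I would set up the polar analysis. Writing the Dirichlet series of $g$ as $F(\bs)=G(\bs)\prod_{\br\in\cB}\zeta(\langle\bs,\br\rangle)$, where $\cB\subseteq\cR_{\bgamma}(\ba)$ is a maximal linearly independent subset (so $\#\cB=\rank\cR_{\bgamma}(\ba)$) and $G$ is holomorphic and of moderate growth on a neighbourhood of the relevant hyperplane, the remaining $\#\cR_{\bgamma}(\ba)-\rank\cR_{\bgamma}(\ba)=\kappa_{\bgamma}(\ba)$ relations among elements of $\cR_{\bgamma}(\ba)$ force extra vanishing of $G$ along the intersection of the corresponding hyperplanes, so that the effective pole order along the critical line $\langle\bs,\bb\rangle^{-1}$-direction is at most $\kappa_{\bgamma}(\ba)+1$. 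I would then apply La~Bret\`eche's theorem (as recalled in \S\,\ref{sec:labreteche}), together with the multiple-Dirichlet-series estimates of \S\,\ref{sec:mds}, to extract a main term of the shape $H^{\langle\ba,\bb\rangle}Q(\log H)$ with $\deg Q\leqslant\kappa_{\bgamma}(\ba)$, and a power-saving (here stated qualitatively as $o(H^{\langle\ba,\bb\rangle-\vartheta})$) error term coming from shifting contours past the first singularities; the contour shift is legitimate because of the holomorphy and polynomial growth of $G$ on the enlarged region guaranteed by \eqref{eqn:notfriable}. The condition $\langle\ba,\br\rangle\geqslant1$ for all non-zero $\br\in\cR_{\bgamma}$ ensures that the hyperplane $\langle\bs,\bb\rangle=\langle\ba,\bb\rangle$ is genuinely the dominant singularity and that no other pole contributes to the main term; the points of $\cR_{\bgamma}(\ba)$ (where equality holds) are precisely the ones producing zeta-factors that are singular there.

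The main obstacle, I expect, is the combinatorial-geometric bookkeeping needed to pin down the pole structure of $F$ and to verify that the correction factor $G$ vanishes to the correct order along each excess hyperplane — equivalently, to show that the pole order is controlled by $\kappa_{\bgamma}(\ba)$ rather than merely by $\#\cR_{\bgamma}(\ba)$. This requires carefully separating the "independent" zeta-factors from the "dependent" ones and tracking, at the level of the local factors, how each linear relation $\sum_{\br\in\cB}c_{\br}\br=\br'$ among lattice points of $\cR_{\bgamma}(\ba)$ translates into a cancellation in the Euler product; a secondary technical point is checking the hypotheses of La~Bret\`eche's criterion (growth of $G$ in vertical strips, analytic continuation slightly past the wall) uniformly in the parameters, which reduces to \eqref{eqn:notfriable} plus standard bounds for $\zeta$. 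Once the analytic input is in place, deducing the stated asymptotic with degree exactly as claimed and some admissible $\vartheta>0$ is routine, so I would present the geometric lemma on $\kappa_{\bgamma}(\ba)$ and pole orders as the technical heart of the argument and relegate the Tauberian step to a citation of \S\,\ref{sec:labreteche}.
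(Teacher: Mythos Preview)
Your overall strategy---express $\energy_{\bgamma,\mathscr{N}}(H^{\bb})$ as a multivariable sum of a multiplicative function, analyse its multiple Dirichlet series via the Euler product, and invoke La~Bret\`eche's Tauberian theorem---matches the paper's. But the polar analysis you describe is inverted, and this would create a genuine gap.

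You propose to factor $F(\bs)=G(\bs)\prod_{\br\in\cB}\zeta(\langle\bs,\br\rangle)$ with $\cB\subseteq\cR_{\bgamma}(\ba)$ a maximal linearly \emph{independent} subset, and then assert that $G$ is holomorphic with ``extra vanishing'' forced by the $\kappa_{\bgamma}(\ba)$ linear relations among the remaining elements. This is backwards: each $\br'\in\cR_{\bgamma}(\ba)\setminus\cB$ contributes a genuine singularity to the Euler product along the hyperplane $\langle\bs,\br'\rangle=1$, so pulling out only a basis of zeta-factors leaves $G$ with \emph{poles}, not zeros, on the extra hyperplanes. There is no cancellation in the local factors $\sum_{\br\in\cR_{\bgamma}}\psi_{\mathscr{N}}(p^{\br})p^{-\langle\bs,\br\rangle}$ that would make $G$ vanish there; the relation $\br'=\sum_{\br\in\cB}c_{\br}\br$ does not translate into a zero of the correction factor.

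The paper avoids this entirely by pulling out the \emph{full} family $\cL_f(\ba)=\{\bs\mapsto\langle\bs,\br\rangle:\br\in\cR_{\bgamma}(\ba)\}$ of $\#\cR_{\bgamma}(\ba)$ linear forms. The remaining factor is then genuinely holomorphic on $\cH_{-\delta\ba}^{\boldm}$ (Lemmata~\ref{lem:rep}~and~\ref{lem:analytic}), and condition~\eqref{eqn:notfriable} together with the Vinogradov--Korobov bound for $\zeta$ verifies the growth hypothesis~(iii). The degree bound $\kappa_{\bgamma}(\ba)$ then falls out of La~Bret\`eche's theorem \emph{for free}: Lemma~\ref{lem:dlB} already asserts that the polynomial has degree at most $\nu_f(\ba)=\#\cC_f(\ba)-\rank\cC_f(\ba)$, and since $\ba\succ\bzero_{\boldm}$ forces $\cE(\ba)=\varnothing$ and hence $\cC_f(\ba)=\cL_f(\ba)$, this is exactly $\#\cR_{\bgamma}(\ba)-\rank\cR_{\bgamma}(\ba)=\kappa_{\bgamma}(\ba)$. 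No separate ``geometric lemma on pole orders'' is needed; the combinatorics you identify as the technical heart is already packaged inside the statement of Lemma~\ref{lem:dlB}, and the paper's proof of Theorem~\ref{thm:mainresult} is accordingly only a few lines long.
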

The above statement is derived from a result of La~Bret\`eche~\cite{dlB,dlBcompter} pertaining to sums of arithmetic functions of many variables.  Note that the polynomial $Q$ is not necessarily non-zero, and thus \eqref{eqn:igorcomment} is not an asymptotic formula for every choice of $\ba$, as is to be expected.  Indeed, the reason why the right-hand side of \eqref{eqn:igorcomment} depends on $\ba$, while the left-hand side does not, is due to the fact that $\bgamma$ explicitly determines all of the $\ba$ for which \eqref{eqn:igorcomment} yields an asymptotic formula.  Indeed, supposing that $\ba$ and $\bb$ satisfy some easily verifiable criteria, we can not only determine exactly the degree of $Q$, but also the leading coefficient.  In this direction, we have the following.
\begin{theorem}\label{thm:mostgen}
    Assume the hypothesis of Theorem~\ref{thm:mainresult}, and moreover suppose that $\ba\in\Q_{>0}^{\boldm}$ is chosen such that
    \begin{equation}\label{eqn:gammaisa}
        \sum_{\br\in\cR_{\bgamma}(\ba)}\br\Z_+=\cR_{\bgamma},
    \end{equation}
    and that $\bb\in\Q_{>0}^{\boldm}$ is such that
    \begin{equation}\label{eqn:gammaisb}
        \langle\bgamma_1,\bb_1\rangle=\cdots=\langle\bgamma_k,\bb_k\rangle.
    \end{equation}
    For any natural number $n$ and prime $p$, define the number 
    \begin{equation*}
        \nu_{\bgamma,\cN}(\ba;n,p)\colonequals\#\{\br\in\cR_{\bgamma,\cN}(p):\langle\ba,\br\rangle=n\}
    \end{equation*}
    where
    \begin{equation*}
        \cR_{\bgamma,\cN}(p)\colonequals\big\{\br\in\cR_{\bgamma}:p^{\br}\in\cN\:\forall(i,j)\in\cI^{\boldm}\big\}.
    \end{equation*}
    Then, the polynomial $Q$ is of degree precisely $\kappa_{\bgamma}(\ba)$, and moreover satisfies
    \begin{equation*}
        Q(\log H)\sim(\log H)^{\kappa_{\bgamma}(\ba)}V_{\bgamma}(\ba,\bb)\prod_p(1-p^{-1})^{\#\cR_{\bgamma}(\ba)}\sum_{n\geqslant0}\frac{\nu_{\bgamma,\mathscr{N}}(\ba;n,p)}{p^n}\quad\text{as}\quad H\to\infty,
    \end{equation*}
    where
    \begin{align*}
        V_{\bgamma}(\ba,\bb)\colonequals\vol\Big\{(v_{\br})_{\br}\in\R_+^{\cR_{\bgamma}(\ba)}:\sum_{\br\in\cR_{\bgamma}(\ba)}r_{i,j}v_{\br}=b_{i,j}\:\forall(i,j)\in\cI^{\boldm}\Big\}.
    \end{align*}
\end{theorem}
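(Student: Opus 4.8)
The plan is to revisit the proof of Theorem~\ref{thm:mainresult} and, under the stronger hypotheses~\eqref{eqn:gammaisa} and~\eqref{eqn:gammaisb}, to pin down the analytic structure of the relevant Dirichlet series precisely enough to determine both the degree of $Q$ and its leading coefficient. Recall from that proof that, under the bijection $\bx\leftrightarrow(\br(p))_{p}$ with $\br(p)=(v_{p}(x_{i,j}))_{(i,j)\in\cI^{\boldm}}$, the quantity $\energy_{\bgamma,\cN}(H^{\bb})$ is the summatory function over $\fB^{\boldm}(H^{\bb})$ of the multiplicative arithmetic function supported on those $\bx$ with $\br(p)\in\cR_{\bgamma,\cN}(p)$ for every prime $p$, whose multiple Dirichlet series is $F(\bs)=\prod_{p}F_{p}(\bs)$ with $F_{p}(\bs)=\sum_{\br\in\cR_{\bgamma,\cN}(p)}p^{-\langle\bs,\br\rangle}$; in particular $F_{p}(\ba)=\sum_{n\geqslant0}\nu_{\bgamma,\cN}(\ba;n,p)p^{-n}$. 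The structural consequence of~\eqref{eqn:gammaisa} that drives everything is this: every $\br\in\cR_{\bgamma}$ is a $\Z_{+}$-combination of the (nonzero, integral) vectors in $\cR_{\bgamma}(\ba)$, on each of which $\langle\ba,\cdot\rangle$ equals $1$; hence $\langle\ba,\br\rangle\in\Z_{\geqslant0}$ for all $\br\in\cR_{\bgamma}$, with $\langle\ba,\br\rangle=1$ exactly for $\br\in\cR_{\bgamma}(\ba)$.

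Next I would isolate the polar part. Setting $G_{p}(\bs)=F_{p}(\bs)\prod_{\br\in\cR_{\bgamma}(\ba)}(1-p^{-\langle\bs,\br\rangle})$ and $G(\bs)=\prod_{p}G_{p}(\bs)$, we obtain the factorisation $F(\bs)=G(\bs)\prod_{\br\in\cR_{\bgamma}(\ba)}\zeta(\langle\bs,\br\rangle)$. Because the only terms of $F_{p}(\bs)$ lying on the hyperplane $\langle\ba,\cdot\rangle=1$ are those indexed by $\cR_{\bgamma}(\ba)$ — all others having $\langle\ba,\cdot\rangle\geqslant2$ by the previous paragraph — and because~\eqref{eqn:notfriable}, with its exponent $\sigma<1$, forces $\sum_{p:\,\cR_{\bgamma,\cN}(p)\neq\cR_{\bgamma}}p^{-1}<\infty$ (using $p^{-\sigma}\geqslant p^{-1}$), a local expansion gives $G_{p}(\bs)=1+O(p^{-1-\delta})$ uniformly on a tube domain $\{\bs:\fR s_{i,j}>a_{i,j}(1-\varepsilon)\ \forall(i,j)\}$, for suitable $\delta,\varepsilon>0$. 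Thus $G$ is holomorphic and bounded there, and $G(\ba)=\prod_{p}F_{p}(\ba)(1-p^{-1})^{\#\cR_{\bgamma}(\ba)}$ is a convergent product of positive reals, so $G(\ba)\neq0$.

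I would then feed the factorisation $F(\bs)=G(\bs)\prod_{\br\in\cR_{\bgamma}(\ba)}\zeta(\langle\bs,\br\rangle)$, now in the analytic class required, into the result of La~Bret\`eche~\cite{dlB,dlBcompter} discussed in \S\,\ref{sec:labreteche}, applied with the finite family of linear forms $\{\langle\,\cdot\,,\br\rangle:\br\in\cR_{\bgamma}(\ba)\}$ (each normalised to take the value $1$ at $\bs=\ba$, which is the role of the condition $\langle\ba,\br\rangle\geqslant1$). This yields $\energy_{\bgamma,\cN}(H^{\bb})=H^{\langle\ba,\bb\rangle}Q(\log H)+o(H^{\langle\ba,\bb\rangle-\vartheta})$ in which the coefficient of the top power $(\log H)^{d}$, with $d$ equal to the number of forms minus the rank of their coefficient vectors, namely $d=\#\cR_{\bgamma}(\ba)-\rank\cR_{\bgamma}(\ba)=\kappa_{\bgamma}(\ba)$, is $G(\ba)$ times the volume of $\{(v_{\br})_{\br}\in\R_{+}^{\cR_{\bgamma}(\ba)}:\sum_{\br}r_{i,j}v_{\br}=b_{i,j}\ \forall(i,j)\in\cI^{\boldm}\}$, that is, $G(\ba)V_{\bgamma}(\ba,\bb)$. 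Recalling $G(\ba)=\prod_{p}(1-p^{-1})^{\#\cR_{\bgamma}(\ba)}\sum_{n\geqslant0}\nu_{\bgamma,\cN}(\ba;n,p)p^{-n}$ from the previous paragraph, this is precisely the asserted asymptotic for $Q(\log H)$, provided this coefficient is nonzero.

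It remains to verify that the leading coefficient does not vanish, which upgrades the degree bound of Theorem~\ref{thm:mainresult} to the equality $\deg Q=\kappa_{\bgamma}(\ba)$. The factor $G(\ba)$ is nonzero by the second paragraph. For the volume, I would observe that~\eqref{eqn:gammaisa} gives $\operatorname{cone}\cR_{\bgamma}(\ba)=\operatorname{cone}\cR_{\bgamma}=\{\bx\in\R_{+}^{\boldm}:\langle\bgamma_{1},\bx_{1}\rangle=\cdots=\langle\bgamma_{k},\bx_{k}\rangle\}$, whose relative interior is the set of strictly positive such $\bx$; since $\bb\succ\bzero_{\boldm}$ satisfies~\eqref{eqn:gammaisb}, it lies in this relative interior, so a Carath\'eodory-type argument produces a strictly positive solution of $\sum_{\br}r_{i,j}v_{\br}=b_{i,j}$. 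Hence the polytope defining $V_{\bgamma}(\ba,\bb)$ meets the interior of $\R_{+}^{\cR_{\bgamma}(\ba)}$ and, being of dimension $\#\cR_{\bgamma}(\ba)-\rank\cR_{\bgamma}(\ba)=\kappa_{\bgamma}(\ba)$, has positive volume. I expect the main difficulty to be the uniform estimate $G_{p}(\bs)=1+O(p^{-1-\delta})$ of the second paragraph: this needs careful bookkeeping of the elements $\br\in\cR_{\bgamma}$ with $\langle\ba,\br\rangle\geqslant2$ and of their multiplicities as $\Z_{+}$-combinations of $\cR_{\bgamma}(\ba)$ (where~\eqref{eqn:gammaisa} does the real work), together with the count of exceptional primes supplied by~\eqref{eqn:notfriable}; the remaining steps are an application of a known theorem and routine polytope geometry.
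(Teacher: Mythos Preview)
Your overall architecture matches the paper's: factor the multiple Dirichlet series as $G(\bs)\prod_{\br\in\cR_{\bgamma}(\ba)}\zeta(\langle\bs,\br\rangle)$, verify the hypotheses of the refined La~Bret\`eche result (Lemma~\ref{lem:dlB2}), evaluate $G(\ba)$ as the stated Euler product, and check the leading coefficient is nonzero. Your geometric verification that $\bb$ lies in the relative interior of the cone generated by $\cR_{\bgamma}(\ba)$, hence admits a strictly positive representation $\bb=\sum_{\br}\beta_{\br}\br$, is a cleaner route to condition~\eqref{eqn:betacond} than the paper's arithmetic argument via $n_{\bb,\br}\bb-\br\in\cR_{\bgamma}$.

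However, you have inverted where the real work lies. The estimate $G_{p}(\bs)=1+O(p^{-1-\delta})$ that you flag as the main difficulty is already established in Lemmata~\ref{lem:rep}--\ref{lem:analytic} as part of the proof of Theorem~\ref{thm:mainresult}, which you are assuming; nothing further is needed there. The substantive gap is your third paragraph: Lemma~\ref{lem:dlB2} does \emph{not} deliver $G(\ba)\,V_{\bgamma}(\ba,\bb)(\log H)^{\kappa_{\bgamma}(\ba)}$ directly. What it gives is $G(\ba)\cdot\vol\Omega_{L_{\bgamma}(\ba)}(H^{\bb})/H^{\langle\ba,\bb\rangle}$, where $\Omega_{L_{\bgamma}(\ba)}(H^{\bb})$ is a region in $(1+\R_{+})^{\cR_{\bgamma}(\ba)}$ cut out by inequalities $\prod_{\br}y_{\br}^{r_{i,j}}\leqslant H^{b_{i,j}}$. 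Relating this to the affine-slice volume $V_{\bgamma}(\ba,\bb)$ is the bulk of the paper's proof (equations~\eqref{eqn:omega}--\eqref{eqn:holder}): after the change of variables $y_{\br}=H^{t_{\br}}$ one must show that the resulting integral $\int_{\cT_{\ba,\bb}}H^{\sum t_{\br}}\prod\dif t_{\br}$ localises near the vertex $\bb=\sum_{\br}t_{\br}\br$ of the simplex, and that the level-set volumes converge there to $V_{\bgamma}(\ba,\bb)$. This is not routine polytope geometry---it is a Laplace-type asymptotic requiring a choice of basis $\cX\subset\cR_{\bgamma}(\ba)$, a continuity argument for the cross-sectional volumes $A(\bsigma)$, and separation of the integral. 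The paper explicitly cautions (after Lemma~\ref{lem:dlB2}) that precisely this identification was mishandled in earlier work; your proposal repeats that elision.
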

Note that the condition \eqref{eqn:gammaisa} essentially requires that the ordinates of $\ba$ are all sufficiently small that $\|\ba\|\leqslant\|\ba'\|$ for any $\ba'\succ\bzero_{\boldm}$ satisfying $\langle\ba',\br\rangle\geqslant1$ for all non-zero $\br\in\cR_{\bgamma}$.  Moreover, to avoid technical complications, we consider the most interesting, balanced case \eqref{eqn:gammaisb}, although the method works in greater generality.  We additionally remark that $V_{\bgamma}(\ba,\bb)$ is to be interpreted as the $\kappa_{\bgamma}(\ba)$-dimensional volume of a surface contained in an embedding of $\R^{\kappa_{\bgamma}(\ba)}$ in $\R^{\cR_{\bgamma}(\ba)}$.\newline

\section{Preliminary Lemmata.}\label{sec:prelim}
As in \cite{dlBKS,MSh,M}, our main tools will come from the theory of multiple Dirichlet series.  In order to reformulate our problem in a way that allows us to make use of this theory, we appeal to the work of La~Bret\`eche \cite{dlB,dlB98}.  Essentially, his work pertains to the asymptotic behaviour of sums of the form     
    \begin{align*}
        S_f(X^{\bb})=\sum_{\bone\preceq\bd\preceq X^{\bb}} f(\bd),
    \end{align*}
    where $\bb\succ\bzero_{\boldm}$ and $f:\matn\to\R_+$ are fixed.  Thus, taking $f$ to be the characteristic function of some set $\cD\subset\matn$, we see that $S_f(X^{\bb}$) simply counts the number of $\bd\in\cD$ with $d_{i,j}\leqslant X^{b_{i,j}}$ for all $(i,j)\in\cI^{\boldm}$.\newline

\subsection{Two results of La~Bret\`eche.}\label{sec:labreteche}
      In order to properly state the aforementioned work of La~Bret\`eche, we must firstly develop some nomenclature.  With $f:\N^{\boldm}\to\R_+$ as above, we define $\cA_f$ to be the collection of $\ba\in\matr$ such that, for any sufficiently small $\delta,\delta'>0$, the following three properties are satisfied:\vspace{-2mm}
        \begin{enumerate}[label={\upshape(\roman*)},itemsep=0mm,leftmargin=10mm]
            \item The multiple Dirichlet series
            \begin{equation*}
                D_f(\bs)\colonequals\sum_{\bd\in\matn}f(\bd)\bd^{-\bs}
            \end{equation*}   
            converges absolutely in the domain $\cH_{\ba}^{\boldm}$ ;
            \item There exists a finite family $\cL_f(\ba)\subset\cL_+^{\boldm}(\C)$ of non-zero linear forms, such that the function $G_f(\ba,\cdot):\cH_{\bzero_{\boldm}}\to \C$, defined by the map
            \begin{align*}
                G_f(\ba,\cdot):\bs\mapsto D_f(\bs+\ba)\prod_{\ell\in\cL_f(\ba)} \ell(\bs),
            \end{align*}
            can be analytically continued to the domain $\cD_{\cL_f(\ba)}(0,\delta,\infty)$, where
            \begin{equation*}
                \cD_{\cL_f(\ba)}(\sigma,\delta,\Delta)\colonequals\bigcap_{\ell\in\cL_f(\ba)}\{\bs\in\cH_{-\delta\ba}^{\boldm}:\sigma-\delta<\langle\fR\bs,\br\rangle<\sigma+\Delta\}.
            \end{equation*}
            \item For all $\varepsilon,\sigma,\Delta>0$, there exists a constant $c_f(\ba;\varepsilon,\sigma,\delta,\delta',\Delta)>0$ such that
            \begin{align*}
                |G_f(\ba,\bs)|\leqslant c_f(\ba;\varepsilon,\sigma,\delta,\delta',\Delta)(1+\|\fI\bs\|)^\varepsilon\prod_{\ell\in\cL_f(\ba)} (1+|\fI\ell(\bs)|)^{1-\delta'\min(0,\fR\ell(\bs))}
            \end{align*}
            holds uniformly for $\bs$ in the domain $\cD_{\cL_f(\ba)}(\sigma,\delta,\Delta)$.            
        \end{enumerate}
        We note that the result originally stated by La~Bret\`eche in Th\'eor\`eme~1 of \cite{dlB} is actually stronger than the following, and is moreover stated in a slightly different manner.  Alternate versions of this result are also stated in \cite{M2,dlBKS,MSh,dlBcompter}, though the form which is sufficient for our purposes is as follows.
\begin{lemma}\label{lem:dlB}
    With the above notations, fix a real $\bb\succ\bzero_{\boldm}$ and an $f:\matn\to\R_+$ which is such that $\bzero_{\boldm}\notin\mathscr{A}_f$.  Take $\ba\in\cA_f$ and put $\cC_f(\ba)=\cL_f(\ba)\cup\cE(\ba)$, where
    \begin{equation*}
        \cE(\ba)=\{\be_{i,j}^*:(i,j)\in\cJ(\ba)\}\quad\text{with}\quad\cJ(\ba)=\{(i,j)\in\cI^{\boldm}:a_{i,j}=0\}.
    \end{equation*}    
    Then, there exists a $\vartheta>0$ and a real polynomial $P$, of degree not exceeding \begin{align*}\nu_f(\ba)=\#\cC_f(\ba)-\rank\cC_f(\ba),\end{align*} such that
    \begin{align*}
        S_f(X^{\bb})=X^{\langle\ba,\bb\rangle}P(\log X)+o(X^{\langle\ba,\bb\rangle-\vartheta})
    \end{align*}
    uniformly as $X\to\infty$.
    \end{lemma}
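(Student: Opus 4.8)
The plan is to obtain the lemma as a direct consequence of Th\'eor\`eme~1 of La~Bret\`eche~\cite{dlB} (see also the variants recorded in \cite{M2,dlBKS,MSh,dlBcompter}); the genuine work is simply to match our hypotheses to theirs. Since $f$ is non-negative, absolute convergence of $D_f$ at a point $\bs_0$ forces absolute convergence at every $\bs$ with $\fR\bs\succeq\fR\bs_0$, so the region of convergence is upward closed; the standing assumption $\bzero_{\boldm}\notin\cA_f$, together with $\ba\in\cA_f$ and $\ba\succ\bzero_{\boldm}$, then places us in the non-degenerate regime in which $D_f$ has a genuine boundary singularity. Conditions (ii) and (iii) in the definition of $\cA_f$ say precisely that, in a neighbourhood of $\bs=\ba$, the series $D_f(\bs)$ admits a polar decomposition along the finite family of hyperplanes $\{\ell(\bs-\ba)=0:\ell\in\cL_f(\ba)\}$, with the polynomial control on vertical strips needed to justify contour shifts; this is exactly the hypothesis under which Th\'eor\`eme~1 of \cite{dlB} applies. (That result is in fact sharper, pinning down also the leading coefficient of $P$; we shall only invoke the stronger form later, in the proof of Theorem~\ref{thm:mostgen}.)

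To implement this, I would start from the multiple Mellin--Perron representation
\begin{equation*}
    S_f(X^{\bb})=\frac{1}{(2\pi i)^{\#\cI^{\boldm}}}\int_{\fR\bs=\ba+\eta\bone_{\boldm}}D_f(\bs)\prod_{(i,j)\in\cI^{\boldm}}\frac{X^{b_{i,j}s_{i,j}}}{s_{i,j}}\,\dif\bs,
\end{equation*}
valid for small $\eta>0$ by absolute convergence in $\cH_{\ba}^{\boldm}$. After the translation $\bs\mapsto\bs+\ba$ the integrand becomes $X^{\langle\ba,\bb\rangle}$ times a function whose polar locus near the origin is governed exactly by the forms of $\cC_f(\ba)=\cL_f(\ba)\cup\cE(\ba)$: those in $\cL_f(\ba)$ come from the poles of $D_f$, while the Perron denominators $1/s_{i,j}$ supply the coordinate form $\be_{i,j}^*$ precisely when $a_{i,j}=0$, the pole otherwise being at $s_{i,j}=-a_{i,j}$, away from the origin and contributing only to the error. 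One then shifts every line of integration past the origin: the growth bound (iii), applied over the domains $\cD_{\cL_f(\ba)}(\sigma,\delta,\Delta)$, bounds the shifted integral by $o(X^{\langle\ba,\bb\rangle-\vartheta})$ for a suitable $\vartheta>0$ depending only on $\delta,\delta'$ and the arrangement $\cC_f(\ba)$, whereas the main term is the iterated residue at $\bs=\bzero_{\boldm}$. By the standard residue calculus for hyperplane arrangements, this residue equals $X^{\langle\ba,\bb\rangle}$ times a polynomial in $\log X$ of degree at most $\#\cC_f(\ba)-\rank\cC_f(\ba)=\nu_f(\ba)$ (at most, rather than exactly, because cancellation in the residue may lower the degree), which gives the asserted formula.

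The chief obstacle is thus not analytic but organisational: one must check with some care that conditions (i)--(iii) defining $\cA_f$ are a faithful reformulation of the hypotheses of Th\'eor\`eme~1 of \cite{dlB}, and, in particular, that the supplementary coordinate hyperplanes $\cE(\ba)$ introduced by the Perron kernel are incorporated consistently into both the degree count $\nu_f(\ba)$ and the value of $\vartheta$. Once that dictionary is fixed, the multidimensional contour shift and the attendant quantitative estimates are precisely those carried out in \cite{dlB}, so I would cite that work for them rather than reproduce the computations here.
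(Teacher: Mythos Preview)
Your proposal is correct and matches the paper's treatment: the paper does not prove this lemma at all but simply records it as a reformulation of Th\'eor\`eme~1 of La~Bret\`eche~\cite{dlB}, noting that alternate versions appear in \cite{M2,dlBKS,MSh,dlBcompter}. Your additional sketch of the Mellin--Perron representation, contour shift, and residue count is a faithful outline of the argument in \cite{dlB}, but goes beyond what the paper itself supplies.
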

    Note that the polynomial $P$ in the above is not necessarily not the zero-polynomial.  Nonetheless, we propose in \S\,\ref{sec:nonexplicit} a procedure for determining the non-triviality of $P$.  Moreover, under some reasonable additional conditions, the leading coefficient of $P$ can be explicitly determined using the following result, which is an immediate consequence of Th\'eor\`eme~2 of \cite{dlB} (cf.\:Th\'eor\`eme~B of \cite{dlBcompter}).
    \begin{lemma}\label{lem:dlB2}
    Assume the hypothesis of Lemma~\ref{lem:dlB}, and moreover suppose that $\ell(\ba)=1$ for all $\ell\in\cL_f(\ba)$, and that there exist real $\beta_\ell>0$ such that
    \begin{align}\label{eqn:betacond}
        \sum_{(i,j)\in\cI^{\boldm}}b_{i,j}\be_{i,j}^*=\sum_{\ell\in\cC_f(\ba)}\beta_\ell\ell.
    \end{align}
    With $G_f(\ba,\bs)$ as defined in condition $\mathrm{(ii)}$ above, suppose that there exists a function $G_f^*(\ba;\cdot):\C^{\cC_f(\ba)}\to\C$ such that 
    \begin{equation}\label{eqn:gstarcond}G_f(\ba,\bs)=G_f^*(\ba;(\ell(\bs))_{\ell\in\cC_f(\ba)}).\end{equation}  Then, we necessarily have
    \begin{align*}
        P(\log X)=\frac{G_f(\ba,\bzero_{\boldm})\:\vol\Omega_{\cL_f(\ba)}(X^{\bb})}{X^{\langle\ba,\bb\rangle}}+O\big((\log X)^{\nu_f(\ba)-1}\big)\quad\text{as}\quad X\to\infty,
    \end{align*}
    where
    \begin{equation*}
        \Omega_{\cL_f(\ba)}(X^{\bb})\colonequals\Big\{\by\in(1+\R_+)^{\cL_f(\ba)}:\prod_{\ell\in\cL_f(\ba)} y_\ell^{\ell(\be_{i,j})}\leqslant X^{b_{i,j}}\:\forall(i,j)\in\cI^{\boldm}\Big\}.
    \end{equation*}
\end{lemma}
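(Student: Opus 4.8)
The plan is to obtain the statement by specialising Th\'eor\`eme~2 of~\cite{dlB} (equivalently Th\'eor\`eme~B of~\cite{dlBcompter}), so that the substance of the argument is confined to matching hypotheses and transcribing notation. Since we already assume the hypothesis of Lemma~\ref{lem:dlB}, conditions~(i)--(iii) of~\S\,\ref{sec:labreteche} encode exactly the analytic inputs of La~Bret\`eche's framework: the region of absolute convergence of $D_f$, the meromorphic continuation of $D_f(\cdot+\ba)$ with polar locus carried by $\cL_f(\ba)$, and the polynomial growth in vertical strips of the regularised function $G_f(\ba,\cdot)$. What must then be checked is that the three supplementary hypotheses are precisely the extra assumptions under which Th\'eor\`eme~2 upgrades the qualitative conclusion of Lemma~\ref{lem:dlB} to an explicit leading term. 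The normalisation $\ell(\ba)=1$ for every $\ell\in\cL_f(\ba)$ aligns all polar hyperplanes of $D_f(\cdot+\ba)$ at a common level, so that their contributions combine into the single pole that governs the main term; the decomposition~\eqref{eqn:betacond} with all $\beta_\ell>0$ is the polyhedral positivity condition placing the weight vector $(b_{i,j})_{(i,j)\in\cI^{\boldm}}$ in the open cone generated by $\cC_f(\ba)$; and~\eqref{eqn:gstarcond} asserts that $G_f(\ba,\cdot)$ genuinely factors through the family of linear forms, which is what renders the eventual residue a volume rather than a more intricate period.

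For orientation, the mechanism underlying Th\'eor\`eme~2 runs as follows. One begins from a multivariable Perron--Mellin representation
\[
    S_f(X^{\bb})=\frac{1}{(2\pi i)^{\#\cI^{\boldm}}}\int_{(\bc)}D_f(\bs)\prod_{(i,j)\in\cI^{\boldm}}\frac{X^{b_{i,j}s_{i,j}}}{s_{i,j}}\,\dif\bs
\]
with $\bc\succ\ba$ inside the domain provided by~(i), shifts $\bs=\ba+\vw$ to extract the factor $X^{\langle\ba,\bb\rangle}$, and uses~(ii) to write $D_f(\ba+\vw)=G_f(\ba,\vw)\prod_{\ell\in\cL_f(\ba)}\ell(\vw)^{-1}$, the Perron kernel contributing the further simple poles along $\be_{i,j}^{*}(\vw)=0$ for $(i,j)\in\cJ(\ba)$; the total polar divisor is thus indexed by $\cC_f(\ba)$. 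Using~\eqref{eqn:betacond} to rewrite $\sum_{(i,j)}b_{i,j}w_{i,j}=\sum_{\ell\in\cC_f(\ba)}\beta_\ell\ell(\vw)$, and~\eqref{eqn:gstarcond} to express the numerator through $(\ell(\vw))_\ell$, one passes to coordinates $u_\ell=\ell(\vw)$ on $\vecspan\cC_f(\ba)$ --- the complementary directions integrating out without affecting the order of growth --- and is left with a product of one-dimensional Mellin integrals in the $u_\ell$. Sweeping each contour across the origin yields the main term of $S_f(X^{\bb})$: the order of the resulting pole equals $\#\cC_f(\ba)-\rank\cC_f(\ba)=\nu_f(\ba)$, the iterated residue reproduces the volume $\vol\Omega_{\cL_f(\ba)}(X^{\bb})$ of the polyhedral region cut out by the constraints $\prod_\ell y_\ell^{\ell(\be_{i,j})}\leqslant X^{b_{i,j}}$ with $y_\ell\geqslant1$, and the numerator at the central point supplies the factor $G_f(\ba,\bzero_{\boldm})$, so that $S_f(X^{\bb})=G_f(\ba,\bzero_{\boldm})\vol\Omega_{\cL_f(\ba)}(X^{\bb})+O\big(X^{\langle\ba,\bb\rangle}(\log X)^{\nu_f(\ba)-1}\big)$ once~(iii) is invoked to bound the displaced integral and the sub-leading residues; dividing through by $X^{\langle\ba,\bb\rangle}$ gives the stated formula for $P(\log X)$.

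The principal difficulty is not analytic but lies in verifying the correspondence with care, and in particular in handling the coordinate forms $\be_{i,j}^{*}$ with $(i,j)\in\cJ(\ba)$: these enlarge the polar family from $\cL_f(\ba)$ to $\cC_f(\ba)$, hence raise the pole order to $\nu_f(\ba)$, yet they must not be counted among the variables of $\Omega_{\cL_f(\ba)}(X^{\bb})$ because the associated constraints are exactly the unbounded directions along which that region already extends freely. One must also confirm that the growth hypothesis~(iii), with its exponent $1-\delta'\min(0,\fR\ell(\bs))$, is of precisely the shape demanded in~\cite{dlB}, so that truncation of the contour integrals is admissible, and that the positivity $\beta_\ell>0$ in~\eqref{eqn:betacond} is what simultaneously ensures $\vol\Omega_{\cL_f(\ba)}(X^{\bb})<\infty$ and forces its growth to be of order exactly $(\log X)^{\nu_f(\ba)}$ (with the understanding that $P$ may still be the zero polynomial should $G_f(\ba,\bzero_{\boldm})$ vanish). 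Once this dictionary is in place, the conclusion of Lemma~\ref{lem:dlB2} is a verbatim transcription of La~Bret\`eche's result, requiring no further estimation.
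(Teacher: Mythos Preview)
Your proposal is correct and follows exactly the paper's approach: the paper does not prove Lemma~\ref{lem:dlB2} at all but simply records it as ``an immediate consequence of Th\'eor\`eme~2 of~\cite{dlB} (cf.\ Th\'eor\`eme~B of~\cite{dlBcompter})'', which is precisely the specialisation you carry out. Your expanded sketch of the Perron--Mellin mechanism and the dictionary between hypotheses is accurate and more detailed than anything the paper provides, but the route is the same.
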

It should be noted that the above result is a consequence of part of the statement of Th\'eor\`eme~2 of \cite{dlB}, and is in fact incorrectly applied in the works of Munsch and Shparlinski \cite{MSh} and Mastrostefano \cite{M2}, wherein a result similar to Lemma~\ref{lem:dlB2} above should have been utilised.  Nonetheless, the results in both of the aforementioned works can be recovered by appealing to Theorem~\ref{thm:mainthm2} of this article.\newline

\subsection{Preliminaries on multiple Dirichlet series.}\label{sec:mds}
Supposing that $\fV\cong\N^{n_1}$ and $\fS\cong\C^{n_2}$ for some $n_1,n_2\in\N$, the theory of multiple Dirichlet series is primarily concerned with the analytic properties of series of the form
\begin{equation*}
    \sum_{\mathfrak{v}\in\fV}a_{\mathfrak{v}}\exp(-\varrho_{\mathfrak{v}}(\bs)),
\end{equation*}
where $a_{\mathfrak{v}}\in\C$ and $\varrho_{\mathfrak{v}}:\fS\to\C$ is linear for all $\mathfrak{v}\in\fV$.  A complete treatment of this topic is not available in the literature, though the works \cite{kohji,sahoo,Bum,BFH,CFH,DFH} provide a number of fundamental results pertaining to certain types of these series.  In this article, we are primarily concerned with the type of multiple Dirichlet series studied by La~Bret\`{e}che~\cite{dlB,dlBcompter,dlB98} and Onozuka \cite{onozuka}.  In particular, we are interested in studying series of the type $D_f(\bs)$ in the cases where $f:\matn\to\R_+$ is polynomially bounded, that is, where there exists a $\bfxi\in\matn$ and a $c>0$ for which $0\leqslant f(\bd)\leqslant c\bd^{\bfxi}$ for all $\bd\in\matn$.  We are interested specifically in the case where $f$ is multiplicative, that is, in the case where, for any two distinct primes $p$ and $q$, we have
\begin{align*}
f(p^{\mathbf{u}}*q^{\mathbf{v}})=f(p^{\mathbf{u}})f(q^{\mathbf{v}})
\end{align*}
for all $\textbf{u},\textbf{v}\in\Z_+^{\boldm}$.  Now, in the case where $f$ is multiplicative, we may express $D_f(\bs)$ as an Euler product of the form
\begin{align}\label{eqn:F2}
    D_f(\bs)= \prod_{p}D_f(\bs;p)\quad\text{where}\quad D_f(\bs;p)=\sum_{\mathbf{r}\in\matz}f(p^{\br})p^{-\langle{\bs,\br\rangle}}.
\end{align}
It is well-known (e.g.\:\cite{balazard,delange,tenenbaum}) that a multiple Dirichlet series and its associated Euler product converge and diverge simultaneously, and it is through the Euler product representation that the absolute convergence of $D_f(\bs)$ can be determined by studying an appropriate additive function.  This gives rise to the relation between the solutions of the multiplicative Diophantine equation \eqref{eqn:x1xk} and the solutions $\mathscr{R}_{\bgamma}$ to the associated additive Diophantine equation.

Through the Euler product representation, the study of the series $D_f$ can be related in some form to the study of certain $L$-functions.  Indeed, the standard method of developing ratios conjectures for families of $L$-functions makes critical use of the theory of multiple Dirichlet series, e.g.\:in the recent work of \v{C}ech \cite{cech1,cech2} and Gao and Zhao \cite{gaozhao1,gaozhao2}.  For applications in the analytic theory of $L$-functions, however, it is not always necessary to know precisely what the region of absolute convergence for $D_f$ is, only that it exists and satisfies certain properties.  These properties can often be deduced from a classical result of Bochner \cite{bochner,bochner2}.  

Of the following two results, the first describes explicitly what the region of absolute convergence of $D_f$ is, and the second shows that the properties (ii) and (iii) of Lemma~\ref{lem:dlB} are satisfied for our specific choice of $f$.
\begin{lemma}\label{lem:absconv}
    Suppose that $\bgamma$, $\cN$, and $\psi_{\mathscr{N}}$ are as in Theorem~\ref{thm:mainresult}, and define the multiplicative function $f_{\bgamma,\mathscr{N}}:\N^{\boldm}\to\{0,1\}$ by the relation $f_{\bgamma,\cN}(p^{\br})=\chi_{\bgamma}(\br)\psi_{\cN}(p^{\br})$ for all $\br\in\Z_+^{\boldm}$ and primes $p$, where $\chi_{\bgamma}$ is the characteristic function of $\mathscr{R}_{\bgamma}$.  Then, the multiple Dirichlet series $D_{f_{\bgamma,\cN}}$ is absolutely convergent precisely in the region
    \begin{equation*}
        \cT_{\bgamma,\cN}=\bigcap_{\br\in\cR_{\bgamma}^*}\{\bs\in\matn:\langle\fR\bs,\br\rangle>1\},
    \end{equation*}
    where $\cR_{\bgamma}^*$ is the set of non-zero elements of $\cR_{\bgamma}$.
\end{lemma}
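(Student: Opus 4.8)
The whole argument will be carried out on the Euler product, with real arguments throughout. Since $f_{\bgamma,\cN}$ is multiplicative and $\{0,1\}$-valued, writing $\bsigma=\fR\bs$ we have, by Tonelli and the cited principle that a multiple Dirichlet series and its Euler product share a region of absolute convergence (e.g.\:\cite{balazard,delange,tenenbaum}),
\begin{equation*}
\sum_{\bd\in\matn}f_{\bgamma,\cN}(\bd)\bd^{-\bsigma}=\prod_p D_{f_{\bgamma,\cN}}(\bsigma;p),\qquad D_{f_{\bgamma,\cN}}(\bsigma;p)=\sum_{\br\in\cR_{\bgamma,\cN}(p)}p^{-\langle\bsigma,\br\rangle},
\end{equation*}
with $\cR_{\bgamma,\cN}(p)=\{\br\in\cR_{\bgamma}:p^{\br}\in\cN\}$ as in Theorem~\ref{thm:mostgen}; thus $D_{f_{\bgamma,\cN}}(\bs)$ converges absolutely exactly when this product is finite, and we may take $\bs=\bsigma$ real. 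Applying the hypothesis \eqref{eqn:notfriable} with $\br=\bzero_{\boldm}$ forces $\bone_{\boldm}\in\cN$ — otherwise that product equals $\prod_p(1-p^{-\sigma})=0$ for $\sigma<1$ — so the term $\br=\bzero_{\boldm}$ contributes $1$ to each local factor and the product is finite if and only if $\sum_p\sum_{\br\in\cR_{\bgamma,\cN}(p)\cap\cR_{\bgamma}^*}p^{-\langle\bsigma,\br\rangle}<\infty$.

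First I would prove that $\bsigma\in\cT_{\bgamma,\cN}$ is necessary. If $\bsigma\notin\cT_{\bgamma,\cN}$, choose $\br_0\in\cR_{\bgamma}^*$ with $\langle\bsigma,\br_0\rangle\leqslant1$. By \eqref{eqn:notfriable} applied to $\br_0$, the product $\prod_{p:\,p^{\br_0}\notin\cN}(1-p^{-\sigma})$ converges and is non-zero for some $\sigma\in(0,1)$, hence $\sum_{p:\,p^{\br_0}\notin\cN}p^{-1}\leqslant\sum_{p:\,p^{\br_0}\notin\cN}p^{-\sigma}<\infty$; combined with $\sum_p p^{-1}=\infty$ this yields $\sum_{p:\,p^{\br_0}\in\cN}p^{-\langle\bsigma,\br_0\rangle}\geqslant\sum_{p:\,p^{\br_0}\in\cN}p^{-1}=\infty$, so the Euler product diverges and $D_{f_{\bgamma,\cN}}$ fails to converge absolutely at $\bsigma$.

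Next, for sufficiency, suppose $\bsigma\in\cT_{\bgamma,\cN}$, i.e.\:$\langle\bsigma,\br\rangle>1$ for every $\br\in\cR_{\bgamma}^*$. The key structural input is that the affine monoid $\cR_{\bgamma}\subseteq\Z_+^{\boldm}$ (the lattice points of a rational polyhedral cone) is finitely generated, by Gordan's lemma; fix a finite generating set $\cH\subseteq\cR_{\bgamma}^*$, so that $\delta\colonequals\bigl(\min_{\bh\in\cH}\langle\bsigma,\bh\rangle\bigr)-1>0$. Decomposing any $\br\in\cR_{\bgamma}^*$ as $\sum_{\bh\in\cH}c_{\bh}\bh$ with $c_{\bh}\in\Z_+$ not all zero gives $\langle\bsigma,\br\rangle\geqslant(1+\delta)\sum_{\bh}c_{\bh}\geqslant1+\delta$, together with the counting bound $\#\{\br\in\cR_{\bgamma}^*:\langle\bsigma,\br\rangle\leqslant T\}\ll_{\bgamma}T^{\#\cH}$; a routine Rankin-type summation then gives $\sum_{\br\in\cR_{\bgamma}^*}p^{-\langle\bsigma,\br\rangle}\ll_{\bgamma,\bsigma}p^{-1-\delta}$ uniformly in $p$, whence $\sum_p\sum_{\br\in\cR_{\bgamma,\cN}(p)\cap\cR_{\bgamma}^*}p^{-\langle\bsigma,\br\rangle}\leqslant\sum_p\sum_{\br\in\cR_{\bgamma}^*}p^{-\langle\bsigma,\br\rangle}<\infty$. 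This shows the product is finite and completes the characterisation.

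\textbf{Main obstacle.} The delicate point is the uniform decay estimate in the sufficiency step: one must exploit the finite generation of $\cR_{\bgamma}$ as a monoid — not merely that $\cR_{\bgamma}^*$ is infinite — in order to upgrade the pointwise inequalities $\langle\bsigma,\br\rangle>1$ into the single decay rate $p^{-1-\delta}$ that controls all $p$-local factors simultaneously. A related subtlety, used above without comment, is that the precise form of hypothesis \eqref{eqn:notfriable}, with its exponent $\sigma<1$, is exactly what forces $\bone_{\boldm}\in\cN$ and what makes the divergence in the necessity step go through.
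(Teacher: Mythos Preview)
Your argument is correct and takes a genuinely different route from the paper's. The paper does not prove Lemma~\ref{lem:absconv} directly; instead it first establishes a representation (Lemma~\ref{lem:rep}) of $D_{f_{\bgamma,\cN}}(\bs)$ as a product of shifted $\zeta$-factors times a holomorphic remainder $\phi_f$, together with a positivity statement $\phi_f(\ba,\ba)>0$ (Lemma~\ref{lem:zerofree}), and then reads off the region of convergence from the simple pole of $\zeta$ at $1$. By contrast, you argue directly on the Euler product: for necessity you isolate a single $\br_0$ and use hypothesis~\eqref{eqn:notfriable} to guarantee that sufficiently many primes $p$ have $p^{\br_0}\in\cN$, while for sufficiency you invoke Gordan's lemma to produce a finite generating set for the monoid $\cR_{\bgamma}$ and hence a uniform gap $\langle\bsigma,\br\rangle\geqslant1+\delta$. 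Your approach is more elementary and self-contained for this lemma alone, and makes the structural reason for the uniform gap transparent; the paper's approach, on the other hand, has the advantage that the representation formula of Lemma~\ref{lem:rep} is exactly what is needed for the subsequent analytic continuation and growth bounds in Lemma~\ref{lem:analytic}, so it is doing double duty. One small remark: your ``Rankin-type'' step can be replaced by the cleaner inequality $\sum_{\br\in\cR_{\bgamma}}p^{-\langle\bsigma,\br\rangle}\leqslant\prod_{\bh\in\cH}(1-p^{-\langle\bsigma,\bh\rangle})^{-1}$, which yields $\sum_{\br\in\cR_{\bgamma}^*}p^{-\langle\bsigma,\br\rangle}\ll_{\bgamma,\bsigma}p^{-1-\delta}$ immediately.
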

\begin{lemma}\label{lem:analytic}
    Under the hypothesis of Lemma~\ref{lem:absconv}, suppose that $\ba\in\matr$ is on the boundary of $\cT_{\bgamma,\cN}$, and let $\cR_{\bgamma}(\ba)$ denote the set of $\br\in\cR_{\bgamma}$ for which $\langle\ba,\br\rangle=1$.
    Then, the function $g_{\bgamma,\cN}(\ba,\cdot):\cH_{\bzero_{\boldm}}^{\boldm}\to\C$, defined by the map
    \begin{equation*}
        g_{\bgamma,\cN}(\ba,\cdot):\bs\mapsto D_{f_{\bgamma,\cN}}(\bs+\ba)\prod_{\br\in\cR_{\bgamma}(\ba)}\langle\bs,\br\rangle,
    \end{equation*}
    can be analytically continued to a holomorphic function in the domain $\cH_{-\delta\ba}^{\boldm}$, provided that $\delta>0$ is sufficiently small.  
    
    Moreover, $g_{\bgamma,\cN}(\ba,\bzero_{\boldm})$ is non-zero, and for any $\varepsilon,\sigma,\Delta>0$, there exists a constant $c_{\bgamma}(\ba;\varepsilon,\sigma,\delta,\Delta)>0$ such that the bound
    \begin{equation*}
        |g_{\bgamma,\cN}(\ba,\bs)|\leqslant c_{\bgamma}(\ba;\varepsilon,\sigma,\delta,\Delta)\prod_{\br\in\cR_{\bgamma}(\ba)}(1+|\langle\fI\bs,\br\rangle|)^{1-\frac13\min(0,\langle\fR\bs,\br\rangle)+\varepsilon}
    \end{equation*}
    holds uniformly in the domain $\cD_{L_{\bgamma}(\ba)}(\sigma,\delta,\Delta)$, where $$L_{\bgamma}(\ba)=\{\ell:\bs\mapsto\langle\bs,\br\rangle:\br\in\cR_{\bgamma}(\ba)\}.$$
\end{lemma}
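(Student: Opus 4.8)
The strategy follows the pattern for multiple Dirichlet series of La~Bret\`eche type (cf.\:\cite{dlBKS,MSh}): the plan is to peel off from $D_{f_{\bgamma,\cN}}$ a product of Riemann zeta factors carrying all of its singularities on the relevant wall, leaving a correction Euler product that extends holomorphically past the abscissa of convergence. By multiplicativity of $f_{\bgamma,\cN}$ and \eqref{eqn:F2} one has $D_{f_{\bgamma,\cN}}(\bs)=\prod_pD_{f_{\bgamma,\cN}}(\bs;p)$ with $D_{f_{\bgamma,\cN}}(\bs;p)=\sum_{\br\in\cR_{\bgamma,\cN}(p)}p^{-\langle\bs,\br\rangle}$, each local factor having constant term $1$ since $\bzero_{\boldm}\in\cR_{\bgamma,\cN}(p)$. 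By Gordan's lemma the monoid $\cR_{\bgamma}$ is finitely generated, say with Hilbert basis $\bq_1,\dots,\bq_M\in\cR_{\bgamma}^*$; since $\ba$ lies on the boundary of the region $\cT_{\bgamma,\cN}$ of Lemma~\ref{lem:absconv}, we have $\langle\ba,\bq_j\rangle\geqslant1$ for all $j$ with equality for at least one, and expanding a general $\br\in\cR_{\bgamma}^*$ over the $\bq_j$ shows both that $\cR_{\bgamma}(\ba)=\{\bq_j:\langle\ba,\bq_j\rangle=1\}$ and that there is a $\delta_0\in(0,\tfrac12]$ with $\langle\ba,\br\rangle\geqslant1+2\delta_0$ for every $\br\in\cR_{\bgamma}^*\setminus\cR_{\bgamma}(\ba)$.

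The crux is to fix a small $\delta\in(0,\delta_0)$ and set
\begin{equation*}
E_{\bgamma,\cN}(\ba,\bs)\colonequals\prod_p\Bigl(D_{f_{\bgamma,\cN}}(\bs+\ba;p)\prod_{\br\in\cR_{\bgamma}(\ba)}\bigl(1-p^{-\langle\bs+\ba,\br\rangle}\bigr)\Bigr),
\end{equation*}
and to prove that this product converges absolutely and locally uniformly on $\cH_{-\delta\ba}^{\boldm}$. Expanding the local factor together with the finite product over $\cR_{\bgamma}(\ba)$ and using $\langle\ba,\br\rangle=1$ on $\cR_{\bgamma}(\ba)$, the contributions of $\langle\ba,\cdot\rangle$-degrees $0$ and $1$ cancel up to the defect imposed by the constraint $p^{\br}\in\cN$, so that uniformly for $\fR\bs\succ-\delta\ba$
\begin{equation*}
D_{f_{\bgamma,\cN}}(\bs+\ba;p)\prod_{\br\in\cR_{\bgamma}(\ba)}\bigl(1-p^{-\langle\bs+\ba,\br\rangle}\bigr)=1-\sum_{\substack{\br\in\cR_{\bgamma}(\ba)\\p^{\br}\notin\cN}}p^{-\langle\bs+\ba,\br\rangle}+O\bigl(p^{-1-\delta_0}\bigr),
\end{equation*}
the error collecting all terms indexed by $\br\in\cR_{\bgamma}$ with $\langle\ba,\br\rangle\geqslant1+2\delta_0$, via the crude bound $\#\{\br\in\cR_{\bgamma}:\langle\ba,\br\rangle\leqslant T\}\ll T^{\#\cI^{\boldm}}$ and the choice of $\delta$ so small that $(1-\delta)(1+2\delta_0)\geqslant1+\delta_0$. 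Summing over $p$, the error is $O(1)$, while the remaining sum is bounded by $\sum_{\br\in\cR_{\bgamma}(\ba)}\sum_{p\,:\,p^{\br}\notin\cN}p^{-(1-\delta)}$, which is finite since $\cR_{\bgamma}(\ba)$ is a finite set and, by hypothesis \eqref{eqn:notfriable}, each $\{p:p^{\br}\notin\cN\}$ satisfies $\sum_{p\,:\,p^{\br}\notin\cN}p^{-\sigma}<\infty$ for some $\sigma<1$, so it suffices to take $\delta$ below $1-\max_{\br\in\cR_{\bgamma}(\ba)}\sigma$. I expect this to be the main obstacle, for it is precisely where both the finite generation of $\cR_{\bgamma}$ (to produce the gap $\delta_0$) and the friability hypothesis \eqref{eqn:notfriable} (to absorb the $\cN$-defect) enter, and one must keep every estimate uniform in $\bs$. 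It follows that $E_{\bgamma,\cN}(\ba,\cdot)$ is holomorphic on $\cH_{-\delta\ba}^{\boldm}$, with $E_{\bgamma,\cN}(\ba,\bzero_{\boldm})=\prod_p(1-p^{-1})^{\#\cR_{\bgamma}(\ba)}D_{f_{\bgamma,\cN}}(\ba;p)$ a convergent product of positive reals.

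I would then compare Euler products on $\cH_{\bzero_{\boldm}}^{\boldm}$ to obtain $D_{f_{\bgamma,\cN}}(\bs+\ba)=E_{\bgamma,\cN}(\ba,\bs)\prod_{\br\in\cR_{\bgamma}(\ba)}\zeta(1+\langle\bs,\br\rangle)$, and, as both sides are meromorphic on $\cH_{-\delta\ba}^{\boldm}$, the identity persists there; multiplying through by $\prod_{\br\in\cR_{\bgamma}(\ba)}\langle\bs,\br\rangle$ and using that $w\mapsto w\zeta(1+w)$ is entire with value $1$ at the origin yields
\begin{equation*}
g_{\bgamma,\cN}(\ba,\bs)=E_{\bgamma,\cN}(\ba,\bs)\prod_{\br\in\cR_{\bgamma}(\ba)}\bigl(\langle\bs,\br\rangle\,\zeta(1+\langle\bs,\br\rangle)\bigr),
\end{equation*}
which is holomorphic throughout $\cH_{-\delta\ba}^{\boldm}$, proving the asserted continuation. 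Evaluating at $\bs=\bzero_{\boldm}$ gives $g_{\bgamma,\cN}(\ba,\bzero_{\boldm})=E_{\bgamma,\cN}(\ba,\bzero_{\boldm})\neq0$ (and, under the extra hypotheses of Theorem~\ref{thm:mostgen}, this equals $\prod_p(1-p^{-1})^{\#\cR_{\bgamma}(\ba)}\sum_{n\geqslant0}\nu_{\bgamma,\cN}(\ba;n,p)p^{-n}$). For the final estimate one notes that $E_{\bgamma,\cN}(\ba,\cdot)$ is bounded on $\cD_{L_{\bgamma}(\ba)}(\sigma,\delta,\Delta)$ (the real parts being bounded there), that $|\langle\bs,\br\rangle|\ll1+|\langle\fI\bs,\br\rangle|$ on that domain, and that Weyl's subconvexity bound $\zeta(\tfrac12+it)\ll_\varepsilon(1+|t|)^{1/6+\varepsilon}$ together with $\zeta(1+it)\ll_\varepsilon(1+|t|)^{\varepsilon}$ and the Phragm\'en--Lindel\"of principle give $\zeta(1+\langle\bs,\br\rangle)\ll_\varepsilon(1+|\langle\fI\bs,\br\rangle|)^{-\frac13\min(0,\langle\fR\bs,\br\rangle)+\varepsilon}$ for $\langle\fR\bs,\br\rangle>-\tfrac12$; multiplying these bounds over $\br\in\cR_{\bgamma}(\ba)$ delivers the claim.
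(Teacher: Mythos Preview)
Your proof is correct and follows essentially the same strategy as the paper's: peel off $\prod_{\br\in\cR_{\bgamma}(\ba)}\zeta(1+\langle\bs,\br\rangle)$ from $D_{f_{\bgamma,\cN}}(\bs+\ba)$, show the residual Euler product extends holomorphically past the abscissa via the gap in the values $\langle\ba,\br\rangle$ together with hypothesis~\eqref{eqn:notfriable}, and then bound the zeta factors by subconvexity. The only differences are cosmetic: your $E_{\bgamma,\cN}$ bundles into one product what the paper separates as $\phi_f(\ba,\cdot)$ times an $\cN$-correction factor (Lemma~\ref{lem:rep}); you invoke Gordan's lemma to make the gap $\delta_0$ and the finiteness of $\cR_{\bgamma}(\ba)$ explicit; and your non-vanishing argument for $E_{\bgamma,\cN}(\ba,\bzero_{\boldm})$ via positivity of local factors plus absolute convergence is cleaner than the paper's separate lower-bound Lemma~\ref{lem:zerofree}.
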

The demonstrations of the above two results constitute the main effort required to establish Theorems~\ref{thm:mainresult}~and~\ref{thm:mostgen}.  Indeed, Lemmata~\ref{lem:absconv}~and~\ref{lem:analytic} can be used to find an optimal choice for $\ba$ in Lemmata~\ref{lem:dlB}~and~\ref{lem:dlB2}.  Now, the above lemmata can be established following a fairly standard argument, the general principle of which is given on p.\,265 of \cite{dlB}.  For the sake of completeness, though, we include the details here.  We firstly prove the following two results pertaining to the region of absolute convergence of $D_f$ and the positivity of $G_f$ on the boundary of this region.
\begin{lemma}\label{lem:rep}
    Assume the hypothesis of Lemma~\ref{lem:absconv} to hold, and suppose that $\ba\in\matr$ lies in the closure of the region of absolute convergence of $D_f$.  Write
    \begin{equation*}
        \cB_{\bgamma}(\ba)=\{\br\in\cR_{\bgamma}:\langle\ba,\br\rangle=\vartheta(\ba)\}\quad\text{where}\quad\vartheta(\ba)=\inf_{\br\in\cR^*_{\bgamma}}\langle\ba,\br\rangle.
    \end{equation*}
    Then $\vartheta(\ba)\geqslant1$, and there exists a holomorphic function $\phi_f(\ba,\cdot):\cH_{(1-\delta)\ba}^{\boldm}\to\cc$ and a real number $\delta>0$ such that
    \begin{equation*}
        D_f(\bs)=\phi_f(\ba,\bs)\prod_{\br\in\cB_{\bgamma}(\ba)}\zeta(\langle\bs,\br\rangle)\prod_{p}\frac{1-p^{-\langle\bs,\br\rangle}}{1-\psi_{\cN}(p^{\br})p^{-\langle\bs,\br\rangle}}\quad\text{for all}\quad\bs\in\cH_{\ba}^{\boldm}.
    \end{equation*}
\end{lemma}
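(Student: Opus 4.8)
The plan is to follow the standard strategy indicated on p.\,265 of \cite{dlB}: from each Euler factor of $D_{f}$ we extract the sub-factor carrying the polar behaviour and show that the residual Euler product continues holomorphically to a region strictly larger than $\cH_{\ba}^{\boldm}$. Since $f=f_{\bgamma,\cN}$ is multiplicative, $D_{f}(\bs)=\prod_{p}D_{f}(\bs;p)$ with $D_{f}(\bs;p)=\sum_{\br\in\cR_{\bgamma}}\psi_{\cN}(p^{\br})p^{-\langle\bs,\br\rangle}$, a series with non-negative coefficients whose $\br=\bzero_{\boldm}$ term is $1$ (here $\bone_{\boldm}\in\cN$, as otherwise $f$ would vanish identically). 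First I would show $\vartheta(\ba)\geqslant1$ by checking that the region of absolute convergence of $D_{f}$ is contained in $\{\bsigma\in\matr:\langle\bsigma,\br\rangle>1\text{ for all }\br\in\cR_{\bgamma}^{*}\}$, whence its closure, which contains $\ba$, lies in the corresponding closed region, so $\langle\ba,\br\rangle\geqslant1$ for all $\br\in\cR_{\bgamma}^{*}$. For the containment: if $\langle\bsigma,\br_{0}\rangle\leqslant1$ for some $\br_{0}\in\cR_{\bgamma}^{*}$, then retaining only the $\br=\br_{0}$ summand gives $D_{f}(\bsigma;p)-1\geqslant\psi_{\cN}(p^{\br_{0}})p^{-\langle\bsigma,\br_{0}\rangle}\geqslant\psi_{\cN}(p^{\br_{0}})p^{-1}$, so $\sum_{p}(D_{f}(\bsigma;p)-1)=\infty$ because $\sum_{p}p^{-1}=\infty$ while $\sum_{p}(1-\psi_{\cN}(p^{\br_{0}}))p^{-1}<\infty$ by the absolute convergence of \eqref{eqn:notfriable} at $\br=\br_{0}$; hence $D_{f}$ diverges at $\bsigma$. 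I would also record that $\cB_{\bgamma}(\ba)$ is finite and non-empty (immediate when $\ba\succ\bzero_{\boldm}$; in general since the entries of $\bgamma$ are strictly positive, so no non-zero element of $\cR_{\bgamma}$ is supported on the vanishing locus of $\ba$), and that, as only finitely many $\br\in\cR_{\bgamma}$ have $\langle\ba,\br\rangle\leqslant\vartheta(\ba)+1$, there is a genuine gap $\vartheta'\colonequals\min\{\langle\ba,\br\rangle:\br\in\cR_{\bgamma},\ \langle\ba,\br\rangle>\vartheta(\ba)\}>\vartheta(\ba)\geqslant1$.

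For the factor $\phi_{f}$, I would set $\phi_{f}(\ba,\bs;p)\colonequals D_{f}(\bs;p)\prod_{\br\in\cB_{\bgamma}(\ba)}(1-\psi_{\cN}(p^{\br})p^{-\langle\bs,\br\rangle})$, expand, and collect terms by the total exponent $\bt\in\cR_{\bgamma}$. The $\bt=\bzero_{\boldm}$ coefficient is $1$; each $\bt\in\cB_{\bgamma}(\ba)$ receives exactly two contributions — one from taking the summand $\bt$ out of $D_{f}(\bs;p)$ and no factor from the product, the other from taking $1$ out of $D_{f}(\bs;p)$ and the factor $-\psi_{\cN}(p^{\bt})p^{-\langle\bs,\bt\rangle}$ out of the product — with coefficients $+\psi_{\cN}(p^{\bt})$ and $-\psi_{\cN}(\bone_{\boldm})\psi_{\cN}(p^{\bt})=-\psi_{\cN}(p^{\bt})$, which cancel; and every surviving exponent $\bt\neq\bzero_{\boldm}$ has $\langle\ba,\bt\rangle\geqslant\vartheta'$. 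Moreover the coefficients of $\phi_{f}(\ba,\bs;p)-1$ are bounded in modulus by $2^{\#\cB_{\bgamma}(\ba)}$, uniformly in $p$ and $\bt$, since the coefficient of $p^{-\langle\bs,\bt\rangle}$ is a signed sum of at most $2^{\#\cB_{\bgamma}(\ba)}$ terms of modulus $\leqslant1$ (one for each subset $S\subseteq\cB_{\bgamma}(\ba)$ with $\bt-\sum_{\br\in S}\br\in\cR_{\bgamma}$). Choosing $\delta>0$ with $(1-\delta)\vartheta'>1$, and using $\sum_{p}p^{-y}\ll2^{-y}$ for $y\geqslant2$ together with $\sum_{\bt\in\matz}2^{-\langle\bsigma,\bt\rangle}<\infty$ for $\bsigma\succ\bzero_{\boldm}$, one obtains $\sum_{p}\sup_{\bs\in K}|\phi_{f}(\ba,\bs;p)-1|<\infty$ on any compact $K\subset\cH_{(1-\delta)\ba}^{\boldm}$, so $\phi_{f}(\ba,\bs)\colonequals\prod_{p}\phi_{f}(\ba,\bs;p)$ is holomorphic on $\cH_{(1-\delta)\ba}^{\boldm}$.

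It then remains to verify the factorisation on $\cH_{\ba}^{\boldm}\subseteq\cH_{(1-\delta)\ba}^{\boldm}$. There $\langle\fR\bs,\br\rangle>\langle\ba,\br\rangle\geqslant1$ for every $\br\in\cR_{\bgamma}^{*}$, so $D_{f}$, each $\zeta(\langle\bs,\br\rangle)$ with $\br\in\cB_{\bgamma}(\ba)$, and each $\prod_{p}(1-p^{-\langle\bs,\br\rangle})(1-\psi_{\cN}(p^{\br})p^{-\langle\bs,\br\rangle})^{-1}$ all converge absolutely; from $\zeta(\langle\bs,\br\rangle)\prod_{p}(1-p^{-\langle\bs,\br\rangle})(1-\psi_{\cN}(p^{\br})p^{-\langle\bs,\br\rangle})^{-1}=\prod_{p}(1-\psi_{\cN}(p^{\br})p^{-\langle\bs,\br\rangle})^{-1}$, multiplying over $\br\in\cB_{\bgamma}(\ba)$, then by $\phi_{f}(\ba,\bs)=\prod_{p}\phi_{f}(\ba,\bs;p)$, and regrouping the resulting absolutely convergent product over $p$, one recovers $\prod_{p}D_{f}(\bs;p)=D_{f}(\bs)$, which is the asserted identity. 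The crux is the pairwise cancellation built into $\phi_{f}(\ba,\bs;p)$ together with the uniform bound $2^{\#\cB_{\bgamma}(\ba)}$ on its coefficients: it is this that pushes the domain of $\phi_{f}$ past the natural boundary $\{\langle\fR\bs,\br\rangle=1\}$ of $D_{f}$. The remaining delicacy is bookkeeping with the Hadamard-closed set $\cN$ — checking that the cancellation survives the failure of $p\mapsto\psi_{\cN}(p^{\br})$ to be completely multiplicative in $\br$, and that \eqref{eqn:notfriable} is precisely what pins down $\vartheta(\ba)$ — but this is routine rather than a genuine obstacle.
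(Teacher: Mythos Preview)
Your proposal is correct and follows essentially the same route as the paper: define $\phi_f(\ba,\bs;p)=D_f(\bs;p)\prod_{\br\in\cB_{\bgamma}(\ba)}(1-\psi_{\cN}(p^{\br})p^{-\langle\bs,\br\rangle})$, observe that the coefficients at exponents in $\cB_{\bgamma}(\ba)$ cancel so that the surviving terms live above the gap $\vartheta'$ (the paper's $\eta(\ba)$), and deduce that $\prod_p\phi_f(\ba,\bs;p)$ converges on $\cH_{(1-\delta)\ba}^{\boldm}$ for suitable $\delta$. The only notable difference is the order of presentation: you prove $\vartheta(\ba)\geqslant1$ at the outset by a direct divergence argument using \eqref{eqn:notfriable}, whereas the paper extracts it at the end from the convergence of the $\zeta$-product on $\cH_{\ba}^{\boldm}$; your ordering is arguably cleaner but not materially different.
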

\begin{proof}
    Firstly, note that there exists an $\eta(\ba)>\vartheta(\ba)$ such that $\langle\fR\ba,\br\rangle\geqslant\eta(\ba)$ for all $\br\in\cR_{\bgamma}$ with $\langle\fR\ba,\br\rangle\neq\vartheta(\ba)$.  Without loss of generality, suppose that $\langle\fR\ba,\br\rangle=\eta(\ba)$ for at least one $\br\in\cR_{\bgamma}$, in which case we see for any $\bs\in\cH_{\ba}^{\boldm}$ that
    \begin{equation}\label{eqn:dfsplit}
        D_f(\bs;p)=1+\sum_{\br\in\cB_{\bgamma}(\ba)}\psi_{\cN}(p^{\br})p^{-\langle\bs,\br\rangle}+O\big(p^{-\eta(\ba)}\big).
    \end{equation}
    Now, appealing to the fact that
        \begin{equation*}
            \prod_{\br\in\cB_{\bgamma}(\ba)}\big(1-\psi_{\cN}(p^{\br})p^{-\langle\bs,\br\rangle}\big)=1-\sum_{\br\in\cB_{\bgamma}(\ba)}\psi_{\cN}(p^{\br})p^{-\langle\bs,\br\rangle}+O\big(p^{-\eta(\ba)}\big),
        \end{equation*}
        we see from \eqref{eqn:dfsplit} that
        \begin{equation}\label{eqn:approxrep}
            D_f(\bs;p)\prod_{\br\in\cB_{\bgamma}(\ba)}\big(1-\psi_{\cN}(p^{\br})p^{-\langle\bs,\br\rangle}\big)=1+R_f(\ba,\bs;p),
        \end{equation}
        where $R_f(\ba,\bs;p)\ll p^{-\eta(\ba)}$ for any $\bs\in\cH_{\ba}^{\boldm}$, with the implied constant depending on $f$ and $\ba$ alone.  Now, we note that the product
        \begin{equation}\label{eqn:conv}
            \prod_{p\leqslant X}\big(1+R_f(\ba,\bs;p)\big)
        \end{equation}
        is absolutely convergent to some complex number $\phi_{f}(\ba,\bs)$ as $X\to\infty$, provided that $\eta(\ba)>1$.  Consequently, taking $\delta>0$ to be sufficiently small that $\vartheta(\ba)<(1-\delta)\eta(\ba)$, we see that \eqref{eqn:conv} is absolutely convergent in the region $\cH_{(1-\delta)\ba}^{\boldm}$, and thus therein defines a holomorphic function $\phi_{f}(\ba,\bs)$.  Moreover, we note that the product
        \begin{equation*}
            \prod_{p\leqslant X}\prod_{\br\in\cB_{\bgamma}(\ba)}\frac{1}{1-\psi_{\cN}(p^{\br})p^{-\langle\bs,\br\rangle}}
        \end{equation*}
        converges absolutely to
        \begin{equation}
            \prod_{\br\in\cB_{\bgamma}(\ba)}\zeta(\langle\bs,\br\rangle)\prod_{p}\frac{1-p^{-\langle\bs,\br\rangle}}{1-\psi_{\cN}(p^{\br})p^{-\langle\bs,\br\rangle}}
        \end{equation}
        as $X\to\infty$, provided that $\bs\in\cH_{\ba}^{\boldm}$ and $\vartheta(\ba)\geqslant1$.  This also conversely implies that we must have $\vartheta(\ba)\geqslant1$.  Thus, the assertion follows from \eqref{eqn:F2} and \eqref{eqn:approxrep}, provided that $\delta$ is sufficiently small that $\langle\bs,\br\rangle>\sigma$ for all $\bs\in\cH_{(1-\delta)\ba}^{\boldm}$ and all $\br\in\cB_{\bgamma}(\ba)$, where $\sigma\in(0,1)$ is as in Lemma~\ref{lem:absconv}.  This completes the demonstration.
\end{proof}
\begin{lemma}\label{lem:zerofree}
    Under the hypothesis of Lemma~\ref{lem:rep}, $\phi_f(\ba,\ba)$ is non-zero.
\end{lemma}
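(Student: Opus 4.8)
The idea is to read off $\phi_f(\ba,\ba)$ from the local factorisation established in the course of proving Lemma~\ref{lem:rep}. Recall from that proof that $\phi_f(\ba,\bs)=\prod_p\big(1+R_f(\ba,\bs;p)\big)$, the product converging absolutely throughout $\cH_{(1-\delta)\ba}^{\boldm}$; since $\ba$ has strictly positive real ordinates, $\ba$ itself lies in this region, so $\phi_f(\ba,\ba)$ is precisely the value of this absolutely convergent product at $\bs=\ba$. An absolutely convergent infinite product of strictly positive real numbers has a strictly positive limit, so it suffices to show that $1+R_f(\ba,\ba;p)>0$ for every prime $p$.

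To this end, I would specialise the identity \eqref{eqn:approxrep} to $\bs=\ba$, which gives
\[
1+R_f(\ba,\ba;p)=D_f(\ba;p)\prod_{\br\in\cB_{\bgamma}(\ba)}\big(1-\psi_{\cN}(p^{\br})p^{-\langle\ba,\br\rangle}\big),
\]
with $f=f_{\bgamma,\cN}$. The first factor satisfies $D_f(\ba;p)=\sum_{\br\in\matz}f_{\bgamma,\cN}(p^{\br})p^{-\langle\ba,\br\rangle}\geqslant1$, since every term is non-negative and the term $\br=\bzero_{\boldm}$ contributes $f_{\bgamma,\cN}(\bone_{\boldm})=1$. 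For each factor in the remaining product one uses that $\langle\ba,\br\rangle=\vartheta(\ba)\geqslant1$ for $\br\in\cB_{\bgamma}(\ba)$ (by Lemma~\ref{lem:rep}) together with $0\leqslant\psi_{\cN}(p^{\br})\leqslant1$, whence that factor is at least $1-p^{-1}\geqslant\tfrac12$. Consequently $1+R_f(\ba,\ba;p)\geqslant 2^{-\#\cB_{\bgamma}(\ba)}>0$ for every $p$, and the lemma follows.

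This argument is essentially immediate once the representation from Lemma~\ref{lem:rep} is in hand; the only steps needing attention are confirming that $\ba$ lies in the domain of $\phi_f(\ba,\cdot)$ and recalling the elementary fact that an absolutely convergent product with no vanishing factor has non-zero limit. I do not expect any real obstacle here. Note that one cannot instead argue directly from the factorisation of $D_f$ in Lemma~\ref{lem:rep} evaluated at $\bs=\ba$, since the zeta-factors $\zeta(\langle\ba,\br\rangle)$ may diverge there when $\vartheta(\ba)=1$, so the local-factor argument is really the natural route.
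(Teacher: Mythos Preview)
Your argument is correct and notably more direct than the paper's. The paper establishes positivity by a global route: it lower-bounds $D_f(\fR\bs)$ on $\cH_{\ba}^{\boldm}$ via a combinatorial restriction to products of prime-powers indexed by $\cB_{\bgamma}(\ba)$, obtaining a bound of the shape $D_f(\fR\bs)\gg\prod_{\br}\zeta(\langle\fR\bs,\br\rangle)\xi_{\br}(\fR\bs)$, then divides through by the zeta-factors using the representation of Lemma~\ref{lem:rep} and lets $\bs\to\ba$. Your approach instead exploits the local factorisation $\phi_f(\ba,\bs)=\prod_p(1+R_f(\ba,\bs;p))$ directly at $\bs=\ba$ and bounds each Euler factor from below by $2^{-\#\cB_{\bgamma}(\ba)}$ using only the trivial inequalities $D_f(\ba;p)\geqslant1$ and $\langle\ba,\br\rangle\geqslant1$. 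This avoids the counting argument and the limiting procedure entirely, and in fact yields a clean uniform lower bound for every local factor.

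One small point worth making explicit: the identity \eqref{eqn:approxrep} is derived in Lemma~\ref{lem:rep} for $\bs\in\cH_{\ba}^{\boldm}$, whereas you evaluate it at $\bs=\ba$, which lies on the boundary. This is harmless, since for each fixed prime the local series $D_f(\bs;p)$ converges throughout $\cH_{(1-\delta)\ba}^{\boldm}$ (indeed, $\langle\fR\bs,\br\rangle\gg\|\br\|$ there, so the number of $\br\in\cR_{\bgamma}$ with $\langle\fR\bs,\br\rangle\leqslant n$ grows only polynomially), and \eqref{eqn:approxrep} is just the definition of $R_f(\ba,\bs;p)$ wherever the left-hand side makes sense. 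It would do no harm to say this in one line.
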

\begin{proof}
    Firstly, if $\br$ can be expressed as a linear combination of elements of $\cB_{\bgamma}(\ba)$ over $\Z_+$, then it surely must be an element of $\cR_{\bgamma}$.  Conversely, for any $\br\in\cR_{\bgamma}$, we have the trivial estimate
    \begin{equation*}
        \#\Big\{(w_{\bx})_{\bx\in\cB_{\bgamma}(\ba)}\in\{0,1\}^{\cB_{\bgamma}(\ba)}:\sum_{\bx\in\cB_{\bgamma}(\ba)}w_{\bx}\bx=\br\Big\}\leqslant2^{\#\cB_{\bgamma}(\ba)},
    \end{equation*}
    and thus, for any sequence $(p_{\br})_{\br\in\cB_{\bgamma}(\ba)}\in\P^{\cB_{\bgamma}(\ba)}$, we must have
    \begin{equation*}
        \#\Big\{(q_{\br})_{\br\in\cB_{\bgamma}(\ba)}\in\P^{\cB_{\bgamma}(\ba)}:\prod_{\br\in\cB_{\bgamma}(\ba)}q_{\br}^{\br}=\prod_{\br\in\cB_{\bgamma}(\ba)}p_{\br}^{\br}\Big\}\leqslant 2^{\#\cB_{\bgamma}(\ba)^2}.
    \end{equation*}
    Consequently, restricting the sum over $\bd\in\N^{\boldm}$ in the definition of $D_f(\bs)$ to certain prime-power tuples, we have by positivity the bound
    \begin{align}\label{eqn:dsgeq}
        D_f(\fR\bs)&\geqslant\frac{1}{2^{\#\cB_{\bgamma}(\ba)^2}}\prod_{\br\in\cB_{\bgamma}(\ba)}\sum_p\psi_{\cN}(p^{\br})p^{-\langle\fR\bs,\br\rangle}\notag\\&=\frac{1}{2^{\#\cB_{\bgamma}(\ba)^2}}\prod_{\br\in\cB_{\bgamma}(\ba)}\zeta(\langle\fR\bs,\br\rangle)\xi_{\br}(\fR\bs)
    \end{align}
    for some functions $\xi_{\br}$, holomorphic and non-zero on the closure of $\cH_{\ba}^{\boldm}$, with the property that $\xi_{\br}(\fR\bs)>0$ for all $\bs\in\cH_{\ba}^{\boldm}$.  Applying Lemma~\ref{lem:rep} to \eqref{eqn:dsgeq}, we get
    \begin{align*}
        \phi_f(\ba,\fR\bs)&=D_f(\fR\bs)\prod_{\br\in\cB_{\bgamma}(\ba)}\frac{1}{\zeta(\langle\fR\bs,\br\rangle)}\prod_p\frac{1-\psi_{\cN}(p^{\br})p^{-\langle\fR\bs,\br\rangle}}{1-p^{-\langle\fR\bs,\br\rangle}}\\
        &\geqslant\frac{1}{2^{\#\cB_{\bgamma}(\ba)^2}}\prod_{\br\in\cB_{\bgamma}(\ba)}\xi_{\br}(\fR\bs)\prod_p\frac{1-\psi_{\cN}(p^{\br})p^{-\langle\fR\bs,\br\rangle}}{1-p^{-\langle\fR\bs,\br\rangle}}
    \end{align*}
    for any $\bs\in\cH_{\ba}^{\boldm}$, by virtue of the fact that $\zeta(s)$ is well-defined and non-zero whenever $\fR s>1$.  Thus, as the map $\bs\mapsto\phi_f(\ba,\bs)$ is holomorphic at $\bs=\ba$, we must have
    \begin{equation*}
        \phi_f(\ba,\ba)=\lim_{\bs\to\ba}\phi_f(\ba,\fR\bs)\geqslant \frac{1}{2^{\#\cB_{\bgamma}(\ba)^2}}\prod_{\br\in\cB_{\bgamma}(\ba)}\big(\liminf_{\bs\to\ba}\xi_{\br}(\fR\bs)\big)\prod_p\frac{p-\psi_{\cN}(p^{\br})}{p-1}>0,
    \end{equation*}
    by virtue of the fact that each $\xi_{\br}$ is non-zero on the closure of $\cH_{\ba}^{\boldm}$.  This completes the proof.
\end{proof}
\begin{proof}[Proof of Lemma~\ref{lem:absconv}.]
    Firstly note that, by virtue of Lemmata~\ref{lem:rep}~and~\ref{lem:zerofree}, $\bs\in\matc$ is in the closure of the region of absolute convergence of $D_f$ if and only if $\vartheta(\fR\bs)\geqslant1$.  Consequently, the closure of the aforementioned region is precisely
    \begin{equation*}
        \bigcap_{\br\in\cR_{\bgamma}^*}\{\bs\in\matc:\langle\fR\bs,\br\rangle\geqslant1\}.
    \end{equation*}
    The assertion now follows from Lemma~\ref{lem:zerofree} and the fact that the $\zeta$-function has a simple pole at the point $s=1$.
\end{proof}
\begin{proof}[Proof of Lemma~\ref{lem:analytic}.]
    Appealing to Lemma~\ref{lem:rep} and the fact that the $\zeta$-function has a simple pole at $s=1$ and is holomorphic elsewhere, we see that
    \begin{equation}\label{eqn:dstarmap}
        G_f(\ba,\cdot):\bs\mapsto\phi_f(\ba,\ba+\bs)\prod_{\br\in\cB_{\bgamma}(\ba)}\zeta(1+\langle\bs,\br\rangle)\langle\bs,\br\rangle\prod_{p}\frac{1-p^{-1-\langle\bs,\br\rangle}}{1-\psi_{\cN}(p^{\br})p^{-1-\langle\bs,\br\rangle}}
    \end{equation}
    can be analytically continued to a holomorphic function in the domain $\cH_{-\delta\ba}^{\boldm}$, where $\delta>0$ as in Lemma~\ref{lem:rep}.  This proves the first part of the assertion.  The second part of the assertion, that $G_f(\ba,\bzero_{\boldm})$ is non-zero, follows from the fact that $G_f(\ba,\bs)$ is holomorphic at $\bs=\bzero_{\boldm}$ and therefore \eqref{eqn:dstarmap} gives
    \begin{align*}
        G_f(\ba,\bzero_{\boldm})&=\lim_{\bs\to\bzero_{\boldm}}\phi_f(\ba,\ba+\bs)\prod_{\br\in\cB_{\bgamma}(\ba)}\zeta(1+\langle\bs,\br\rangle)\langle\bs,\br\rangle\prod_{p}\frac{1-p^{-1-\langle\bs,\br\rangle}}{1-\psi_{\cN}(p^{\br})p^{-1-\langle\bs,\br\rangle}}\\
        &=\phi_f(\ba,\ba)\prod_{\br\in\cB_{\bgamma}(\ba)}\prod_{p}\frac{1-p^{-1}}{1-\psi_{\cN}(p^{\br})p^{-1}},
    \end{align*}
    which is non-zero by virtue of Lemma~\ref{lem:zerofree}.\par
    Now, assume without loss of generality that $2\delta<1$.  Then, appealing to the Vinogradov-Korobov estimate for the $\zeta$-function (e.g.\:Corollary 8.28 of \cite{IK}), we see that the bound
    \begin{equation*}
        \zeta(1+s)s\lle(1+|\fI s|)^{1-\frac13\min(0,\fR s)+\varepsilon}
    \end{equation*}
    holds uniformly in the vertical strip $\{s\in\C:\sigma-\delta<\fR s<\sigma+\Delta\}$, where the implied constant depends on at most $\varepsilon$, $\sigma$, $\delta$, and $\Delta$.  Consequently, appealing to \eqref{eqn:dstarmap}, we have the majorisation
    \begin{equation*}
        G_f(\ba,\bs)\lle|\phi_f(\ba,\ba+\bs)|\prod_{\br\in\cB_{\bgamma}(\ba)}(1+|\langle\fI\bs,\br\rangle|)^{1-\frac13\min(0,\langle\fR\bs,\br\rangle)+\varepsilon}
    \end{equation*}
    for any $\bs\in\fD_{\cL_f(\ba)}(\sigma,\delta,\Delta)$, where the implied constant depends on at most $\varepsilon$, $\sigma$, $\delta$, $\Delta$, and $\cB_{\bgamma}(\ba)$.  The proof is then complete on noting that $|\phi_f(\ba,\ba+\bs)|$ is uniformly bounded in the region $\cH_{-\delta\ba}^{\boldm}$ by a constant depending only on $f$, $\ba$, and $\delta$.  This fact, of course, follows from the definition of $\phi_f(\ba,\bs)$ in \eqref{eqn:conv}.
\end{proof}
From the above discussions, we see that the condition on the product \eqref{eqn:notfriable} in Theorem~\ref{thm:mainresult} could be replaced with a slightly weaker condition.  This, however would result in us having to assume an unreasonably strong hypothesis in place of \eqref{eqn:gammaisa} for the corresponding Theorem~\ref{thm:mostgen}.  We elaborate more on this point in \S\,3.2 of \cite{AfCo}.\newline

\section{Demonstrations.}\label{sec:demo}
We start this section by proving Theorems~\ref{thm:mainresult}~and~\ref{thm:mostgen}, from which we later derive Theorems~\ref{thm:mainthm2}~and~\ref{thm:mainthm3} as consequences.
\begin{proof}[Proof of Theorem~\ref{thm:mainresult}.]
    Taking $f_{\bgamma,\cN}$ as in Lemma~\ref{lem:absconv}, we clearly have
    \begin{align}\label{eqn:sumS}
        \energy_{\bgamma,\mathscr{N}}(H^{\bb})=\#\{\bx\in\mathfrak{B}^{\boldm}(H^{\bb}):f_{\bgamma,\cN}(\bx)=1\}=S_{f_{\bgamma,\cN}}(H^{\bb}).
    \end{align}
    By virtue of Lemmata~\ref{lem:absconv}~and~\ref{lem:analytic},  the hypothesis of Lemma~\ref{lem:dlB} is satisfied if we take $\cL_{f_{\bgamma,\cN}}(\ba)=L_{\bgamma}(\ba)$ and $G_{f_{\bgamma,\cN}}=g_{\bgamma,\cN}$, where $L_{\bgamma}(\ba)$ and $g_{\bgamma,\cN}$ are as in Lemma~\ref{lem:analytic}.  The assertion then follows immediately from Lemma~\ref{lem:dlB} and \eqref{eqn:sumS}, on noting that $\rank L_{\bgamma}(\ba)=\rank\cR_{\bgamma}(\ba)$.
\end{proof}
\begin{proof}[Proof of Theorem~\ref{thm:mostgen}.]
    We firstly note that the additional conditions imply that any $\br\in\cR_{\bgamma}$ can be constructed from a linear combination of $\br'\in\cR_{\bgamma}(\ba)$ over $\Z_+$, so that the condition \eqref{eqn:gstarcond} of Lemma~\ref{lem:dlB2} is satisfied.  Moreover, by virtue of \eqref{eqn:gammaisa}, \eqref{eqn:gammaisb}, and the fact that $\bb$ has rational ordinates, it is clear that for any $\br\in\cR_{\bgamma}(\ba)$ there exists a natural number $n_{\bb,\br}$ such that $n_{\bb,\br}\bb-\br\in\cR_{\bgamma}$, and thus there must exist integers $\lambda_{\br}(\br')\geqslant0$ such that
    \begin{equation*}
        n_{\bb,\br}\bb-\br=\sum_{\br'\in\cR_{\bgamma}(\ba)}\lambda_{\br}(\br')\br'.
    \end{equation*}
    Consequently, we have
    \begin{align*}
        \sum_{\br\in\cR_{\bgamma}(\ba)}n_{\bb,\br}\bb&=\sum_{\br\in\cR_{\bgamma}(\ba)}\br+\sum_{\br\in\cR_{\bgamma}(\ba)}\sum_{\br'\in\cR_{\bgamma}(\ba)}\lambda_{\br}(\br')\br'\\
        &=\sum_{\br\in\cR_{\bgamma}(\ba)}\br\Big(1+\sum_{\br'\in\cR_{\bgamma}(\ba)}\lambda_{\br'}(\br)\Big),
    \end{align*}
    so, letting $\br_\ell$ denote the unique element of $\cR_{\bgamma}(\ba)$ associated to $\ell\in L_{\bgamma}(\ba)$, we have
    \begin{equation*}
        \sum_{(i,j)\in\cI^{\boldm}}b_{i,j}\be_{i,j}^*=\sum_{\ell\in L_{\bgamma}(\ba)}\beta_{\ell}\ell\quad\text{where}\quad\beta_\ell=\Big(\sum_{\br\in\cR_{\bgamma}(\ba)}n_{\bb,\br}\Big)^{-1}\Big(1+\sum_{\br\in\cR_{\bgamma}(\ba)}\lambda_{\br}(\br_\ell)\Big)>0.
    \end{equation*}
    Therefore, the condition \eqref{eqn:betacond} of Lemma~\ref{lem:dlB2} is satisfied.  Hence, appealing to the fact that $g_{\bgamma,\cN}(\ba,\bzero_{\boldm})$ is non-zero, by virtue of Lemma~\ref{lem:analytic}, we see that all the conditions of Lemma~\ref{lem:dlB2} are satisfied for $f_{\bgamma,\cN}$ by taking $\cL_{f_{\bgamma,\cN}}(\ba)=L_{\bgamma}(\ba)$ and $G_{f_{\bgamma,\cN}}=g_{\bgamma,\cN}$, where $L_{\bgamma}(\ba)$ and $g_{\bgamma,\cN}$ are as in Lemma~\ref{lem:analytic}.  Thus, the polynomial $Q$ must satisfy
    \begin{equation}\label{eqn:middlesteps}
        Q(\log H)=\frac{g_{\bgamma,\cN}(\ba,\bzero_{\boldm})\:\vol\Omega_{L_{\bgamma}(\ba)}(H^{\bb})}{H^{\langle\ba,\bb\rangle}}+O\big((\log H)^{\kappa_{\bgamma}(\ba)-1}\big)
    \end{equation}
    as $H\to\infty$.\par  
    Now, for any $\sigma>0$, we have
    \begin{align}\label{eqn:gchi}
        &g_{\bgamma,\cN}(\ba,\sigma\bone_{\boldm})\notag\\
        &\hspace{5mm}=D_{f_{\bgamma,\cN}}(\ba+\sigma\bone_{\boldm})\prod_{\br\in\cR_{\bgamma}(\ba)}\sigma\|\br\|\notag\\
        &\hspace{5mm}=\Big(\prod_{\br\in\cR_{\bgamma}(\ba)}\sigma\|\br\|\Big)\prod_p\sum_{n_1\geqslant0}\sum_{n_2\geqslant0}\frac{\nu_{\bgamma,\cN}(\ba;n_1,n_2,p)}{p^{\sigma n_1+n_2}}\notag\\
        &\hspace{5mm}=\Big(\prod_{\br\in\cR_{\bgamma}(\ba)}\sigma\|\br\|\zeta(1+\sigma\|\br\|)\Big)\prod_p\sum_{n_1\geqslant0}\sum_{n_2\geqslant0}\frac{(1-p^{-1-\sigma\|\br\|})\nu_{\bgamma,\cN}(\ba;n_1,n_2,p)}{p^{\sigma n_1+n_2}},
    \end{align}
    since, for any $\delta>0$, both $\zeta(1+\delta)$ and $1/\zeta(1+\delta)$ have absolutely convergent Euler products.  Here, $\nu_{\bgamma,\cN}(\ba;n_1,n_2,p)$ denotes the number of $\br\in\cR_{\bgamma,\cN}(p)$ with $\|\br\|=n_1$ and $\langle\ba,\br\rangle=n_2$.  Consequently, on noting that the map $\bs\mapsto g_{\bgamma,\cN}(\ba,\bs)$ is, by virtue of Lemma~\ref{lem:analytic}, holomorphic at $\bs=\bzero_{\boldm}$, and that $\zeta$ has a simple pole at $s=1$ of residue 1, we derive from \eqref{eqn:gchi} that    
    \begin{align}\label{eqn:gchigamma}
        g_{\bgamma,\cN}(\ba,\bzero_{\boldm})=\prod_p(1-p^{-1})^{\#\cR_{\bgamma}(\ba)}\sum_{n\geqslant0}\frac{\nu_{\bgamma,\cN}(\ba;n,p)}{p^{n}}.
    \end{align}\par
    It now remains only to establish an asymptotic expression for $\vol\Omega_{L_{\bgamma}(\ba)}(H^{\bb})$, for then Theorem~\ref{thm:mostgen} would follow from \eqref{eqn:middlesteps}~and~\eqref{eqn:gchigamma}.  We note that expressing the aforementioned volume as a simple formula is unfortunately not possible in full generality.  Of course, in any fixed case the volume of $\Omega_{L_{\bgamma}(\ba)}(H^{\bb})$ can be calculated explicitly, as has been done in the small cases considered in \cite{persal,dlB98,HB}.  The best we can achieve in full generality is to relate $\vol\Omega_{L_{\bgamma}(\ba)}(H^{\bb})$ to the simpler expression $V_{\bgamma}(\ba,\bb)$ in Theorem~\ref{thm:mainresult}, which we shall do presently.  So, with $\br_\ell$ as before, write
    \begin{equation*}
        \cT_{\ba,\bb}=\Big\{(t_{\br})_{\br}\in\R_+^{\cR_{\bgamma}(\ba)}:\sum_{\br\in\cR_{\bgamma}(\ba)}r_{i,j}t_{\br}\leqslant b_{i,j}\:\forall(i,j)\in\cI^{\boldm}\Big\},
    \end{equation*}
    and note that, by the change of variables $y_\ell=\exp(t_{\br_\ell}\log H)$, we have
    \begin{align}\label{eqn:omega}
        \vol\Omega_{L_{\bgamma}(\ba)}(H^{\bb})=(\log H)^{\#\cR_{\bgamma}(\ba)}\rint_{\cT_{\ba,\bb}}\prod_{\br\in\cR_{\bgamma}(\ba)}\exp(t_{\br}\log H)\:\dif t_{\br}.
    \end{align}
    Now, suppose that $\cX\subset\R_+^{\boldm}$ is a minimal basis for $\cR_{\bgamma}(\ba)$, and for each $\br\in\cR_{\bgamma}(\ba)$, define the unique real numbers $\theta_{\bx,\br}$ to be such that
    \begin{equation*}
        \sum_{\bx\in\cX}\theta_{\bx,\br}\bx=\br.
    \end{equation*}
    Without loss of generality, suppose that $\cX$ is chosen so that $\lambda_{\bx}=\theta_{\bx,\bb}>0$ for all $\bx\in\cX$, and such that
    \begin{equation*}
        \sum_{\bx\in\cX}x_{i,j}\sigma_{\bx}\leqslant b_{i,j}\:\forall(i,j)\in\cI^{\boldm}\quad\text{if and only if}\quad\sigma_{\bx}\leqslant\lambda_{\bx}\:\forall\bx\in\cX,
    \end{equation*}
    which clearly can be done.  Let $\cS_{\ba,\bb}(H)$ denote the set
    \begin{align*}
        \Big\{(\sigma_{\bx})_{\bx}\in\R_+^{\cX}:\sigma_{\bx}\leqslant\lambda_{\bx}\:\forall\bx\in\cX,\sum_{(i,j)\in\cI^{\boldm}}a_{i,j}\sum_{\bx\in\cX}x_{i,j}\sigma_{\bx}\geqslant\langle\ba,\bb\rangle-\frac{1}{\sqrt{\log H}}\Big\},
    \end{align*}
    and, for any $\bsigma=(\sigma_{\bx})_{\bx}\in\R_+^{\cX}$, define $A(\bsigma)=\vol\cA(\bsigma)$, where
    \begin{equation*}
        \cA(\bsigma)=\Big\{(t_{\br})_{\br}\in\R_+^{\cR_{\bgamma}(\ba)}:\sum_{\br\in\cR_{\bgamma}(\ba)}\theta_{\bx,\br}t_{\br}=\sigma_{\bx}\:\forall\bx\in\cX\Big\}.
    \end{equation*}
    Then, we may rewrite the integral on the right-hand side of \eqref{eqn:omega} as
    \begin{align}\label{e:hold}
        &\rint_{\cT_{\ba,\bb}}\prod_{\br\in\cR_{\bgamma}(\ba)}\exp(t_{\br}\log H)\:\dif t_{\br}\notag\\
        &\hspace{5mm}=\rint_{\R_+^{\cX}}\exp\Big(\log H\sum_{(i,j)\in\cI^{\boldm}}a_{i,j}\sum_{\bx\in\cX}x_{i,j}\sigma_{\bx}\Big)\:A(\sigma_{\bx})_{\bx}\prod_{\bx\in\cX}\dif\sigma_{\bx}\notag\\
        &\hspace{5mm}=\rint_{\cS_{\ba,\bb}(H)}\exp\Big(\log H\sum_{(i,j)\in\cI^{\boldm}}a_{i,j}\sum_{\bx\in\cX}x_{i,j}\sigma_{\bx}\Big)\:A(\sigma_{\bx})_{\bx}\prod_{\bx\in\cX}\dif\sigma_{\bx}\notag\\
        &\hspace{95mm}+O_{\ba,\bb}(H^{\langle\ba,\bb\rangle}e^{-\sqrt{\log H}})
    \end{align}
    Note that $\cA(\bsigma)$ is of dimension $\kappa_{\bgamma}(\ba)$ whenever $\sigma_{\bx}>0$ for all $\bx\in\cX$, by virtue of the fact that $\rank\cX=\#\cX=\rank\cR_{\bgamma}(\ba)$.  It is also clear that there exists a fixed embedding $\R_+^{\kappa_{\bgamma}(\ba)}\cong E\subset\R_+^{\cR_{\bgamma}(\ba)}$ such that $\cA(\bsigma)\subset E$ as $\bsigma\uparrow\blambda=(\lambda_{\bx})_{\bx}$, and thus$$\limsup_{\bsigma\uparrow\blambda}|A(\bsigma)-V_{\bgamma}(\ba,\bb)|=\limsup_{\bsigma\uparrow\blambda}|A(\bsigma)-A(\blambda)|=0,$$
    on account of the volumes both being measured in $E$ (cf.\:\S\,3.3 of \cite{AfCo}).  Notice that, by construction of the set $\mathscr{S}_{\ba,\bb}(H)$, we necessarily have $\mathscr{S}_{\ba,\bb}(H)\to\{\blambda\}$ as $H\to\infty$, from which the above implies that \begin{equation}\label{e:sup}\sup_{\bsigma\in\mathscr{S}_{\ba,\bb}(H)}|A(\bsigma)-V_{\bgamma}(\ba,\bb)|=o_{\ba,\bb}(1)\quad\text{as}\quad H\to\infty.\end{equation}By positivity of the integrand, we deduce from \eqref{e:hold} and \eqref{e:sup} that    
    \begin{align}\label{eqn:holder}
        &\rint_{\cT_{\ba,\bb}}\prod_{\br\in\cR_{\bgamma}(\ba)}\exp(t_{\br}\log H)\:\dif t_{\br}\notag\\
        &\hspace{5mm}=(V_{\bgamma}(\ba,\bb)+o_{\ba,\bb}(1))\rint_{\cS_{\ba,\bb}(H)}\prod_{\bx\in\cX}\exp\Big(\log H\sum_{(i,j)\in\cI^{\boldm}}a_{i,j}x_{i,j}\sigma_{\bx}\Big)\:\dif\sigma_{\bx}\notag\\
        &\hspace{95mm}+O_{\ba,\bb}(H^{\langle\ba,\bb\rangle}e^{-\sqrt{\log H}}).
    \end{align}
    Now, in order that it may become separable, we extend the integral on the right-hand side of \eqref{eqn:holder}, with negligible error, to derive
    \begin{align*}
        &\rint_{\cS_{\ba,\bb}(H)}\prod_{\bx\in\cX}\exp\Big(\log H\sum_{(i,j)\in\cI^{\boldm}}a_{i,j}x_{i,j}\sigma_{\bx}\Big)\:\dif\sigma_{\bx}\\
        &\hspace{20mm}=\prod_{\bx\in\cX}\rint_{-\infty}^{\lambda_{\bx}}\exp\Big(\log H\sum_{(i,j)\in\cI^{\boldm}}a_{i,j}x_{i,j}\sigma_{\bx}\Big)\:\dif\sigma_{\bx}+O_{\ba,\bb}(H^{\langle\ba,\bb\rangle}e^{-\sqrt{\log H}})\\
        &\hspace{20mm}=\frac{1}{(\log H)^{\#\cX}}\prod_{\bx\in\cX}\exp\Big(\log H\sum_{(i,j)\in\cI^{\boldm}}a_{i,j}x_{i,j}\lambda_{\bx}\Big)+O_{\ba,\bb}(H^{\langle\ba,\bb\rangle}e^{-\sqrt{\log H}})\\
        &\hspace{20mm}=\frac{H^{\langle\ba,\bb\rangle}}{(\log H)^{\#\cX}}+O_{\ba,\bb}(H^{\langle\ba,\bb\rangle}e^{-\sqrt{\log H}}),
    \end{align*}
    from which the assertion follows on appealing to \eqref{eqn:omega} and \eqref{eqn:holder}.
\end{proof} 
In order to use Theorems~\ref{thm:mainresult}~and~\ref{thm:mostgen} to prove the asymptotic formul\ae\:given in Theorems~\ref{thm:mainthm2}~and~\ref{thm:mainthm3}, we need only find appropriate choices for the tuples $\ba$, $\bb$, and $\bgamma$.  As Theorem~\ref{thm:mainthm2} can be viewed as a corollary of Theorem~\ref{thm:mainthm3}, we prove both simultaneously, proceeding as follows.
\begin{proof}[Proof of Theorems~\ref{thm:mainthm2}~and~\ref{thm:mainthm3}.] Firstly note that, with $\boldm=k\bone_m\oplus(1)$, we have\begin{align*}
   \energy_{\bgamma}(H^{\bb})=M_{m,k,\ell}(H^{\bc})\quad\text{where}\quad\bgamma= \bigoplus_{i\leqslant k} \bone_m \oplus(\ell)\quad\text{and}\quad  \bb = \bigoplus_{i\leqslant k} \bc\oplus(\|\bc\|/\ell),
\end{align*}
in which case
\begin{align*}
    \cR_{\bgamma} = \{ \br\in \Z_{+}^{\boldm} \colon \| \br_1 \| =\dots =\| \br_k \|=\ell\|\br_{k+1}\|\}.
\end{align*}
Now, our above choice of $\bb$ trivially satisfies the condition~\eqref{eqn:gammaisb}, and therefore to use Theorem~\ref{thm:mostgen}, we need only check that the condition~\eqref{eqn:gammaisa} is satisfied by the tuple
\begin{align*}
    \ba=\bigoplus_{i\leqslant k} \dfrac{1}{(k+1)\ell}\bone_m\oplus\Big(\frac{1}{k+1}\Big).
\end{align*}In order to prove that this condition is met, firstly note that, for all non-zero $\br \in \cR_{\bgamma}$, we must have $\| \br_i\|\geqslant\ell$ for all $i\leqslant k$, as well as $\|\br_{k+1}\|\geqslant1$, and thus\begin{align}\label{eqn:ar1}
    \langle \ba, \br \rangle = \frac{1}{(k+1)\ell}(\|\br_1\|+\cdots+\|\br_k\|)+\frac{1}{k+1}\|\br_{k+1}\|\geqslant1.
\end{align}
Moreover, it is clear that equality holds in \eqref{eqn:ar1} precisely when $\|\br_i\|=\ell$ for all $i\leqslant k$, in which case $\|\br_{k+1}\|=1$, and consequently
\begin{align*}
    \cR_{\bgamma}(\ba) = \Big\{  \Big(\sum_{t\leqslant\ell}\bu_t\Big)\oplus(1) \colon \bu_t\in\cU\:\forall t\leqslant \ell\Big\},
\end{align*} where
\begin{equation*}
    \cU\colonequals\Big\{\sum_{i\leqslant k}\e_{i,w_i}:w_i\in\{1,\ldots,m\}\:\forall i\leqslant k\Big\},
\end{equation*}
with $\e_{i,j}\in\cc^{\boldm}$ denoting the standard basis vectors.  Hence, it is clear that the condition \eqref{eqn:gammaisa} is satisfied by our choice of $\ba$.  Thus, appealing to Theorem~\ref{thm:mostgen}, we now need only show that
\begin{align}\label{eqn:cardcR1}
    \# \cR_{\bgamma}(\ba) = \binom{\ell+m-1}{m-1}^{\!\!k}\quad\text{and}\quad\rank\cR_{\bgamma}(\ba)=(m-1)k+1,
\end{align}and that, for any natural number $n$ and prime $p$, we have
\begin{equation}\label{eqn:nugammaaen}
    \nu_{\bgamma,\N^{\boldm}}(\ba;n,p)=\binom{\ell n+m-1}{m-1}^{\!\!k}.
\end{equation}
The first part of \eqref{eqn:cardcR1} is trivial, and for any natural number $n$ and prime $p$, we have
\begin{equation*}
    \nu_{\bgamma,\N^{\boldm}}(\ba;n,p)=\#\{\br\in\cU:\|\br_i\|=\ell n\:\forall i\leqslant k\},
\end{equation*}
from which \eqref{eqn:nugammaaen} follows using a standard combinatorial argument.  The second part of \eqref{eqn:cardcR1} will follow from the fact that $\rank\cR_{\bgamma}(\ba)=\rank\cU$ if we can show that
\begin{equation*}
	\cB\colonequals\bigcup_{i\leqslant k}\Big\{\e_{1,1}+\cdots+\e_{i-1,1}+\e_{i,w_i}+\e_{i+1,1}+\cdots+\e_{k,1}:w_i\in\{1,\ldots,m\}\Big\}
\end{equation*}
is linearly independent, and is moreover a basis for the span of $\cU$.  We shall return to the proof of these two statements later, but for now note that \eqref{eqn:cardcR1} implies that
\begin{align}\label{eqn:kappa1}
    \kappa_{\bgamma}(\ba) = \binom{\ell+m-1}{m-1}^{\!\!k}-(m-1)k-1,
\end{align}
where $\kappa_{\bgamma}(\ba)$ is as in Theorem~\ref{thm:mainresult}.  Thus, appealing to \eqref{eqn:cardcR1}, \eqref{eqn:nugammaaen}, and \eqref{eqn:kappa1} as well as the fact that
\begin{align*}
    \langle \ba,\bb\rangle = \frac{1}{(k+1)\ell}(\|\bc\|+\cdots+\|\bc\|)+\frac{1}{k+1}(\|\bc\|/\ell)= \|\bc\|/\ell,
\end{align*}
we see by Theorems~\ref{thm:mainresult}~and~\ref{thm:mostgen} that there exists a $\vartheta_{m,k,\ell,\bc}>0$ such that
\begin{align*}
    \energy_{\bgamma}(H^{\bb}) = H^{\|\bc\|/\ell} Q_{m,k,\ell,\bc}(\log H) + o(H^{\|\bc\|/\ell-\vartheta_{m,k,\ell,\bc}}),
\end{align*}
where $Q_{m,k,\ell,\bc}$ is a real polynomial satisfying
\begin{align*}
 &Q_{m,k,\ell,\bc}(\log H)\sim(\log H)^{\binom{\ell+m-1}{m-1}^{\!\!k}-(m-1)k-1}V_{\bgamma}(\ba,\bb)\\&\hspace{63mm}\times\prod_p(1-p^{-1})^{\binom{\ell+m-1}{m-1}^{\!\!k}}\sum_{n\geqslant0}\frac{1}{p^{n}}\binom{\ell n+m-1}{m-1}^{\!\!k}
\end{align*}
as $H\to\infty$.  The assertions then both follow on noting that $V_{\bgamma}(\ba,\bb)=V_{m,k,\ell,\bc}$ by our choice of parameters.  It thus remains only to prove that indeed the two desired properties of $\mathscr{B}$ hold.

(i) \textit{Linear independence of $\mathscr{B}$.}  To show that $\cB$ is a linearly independent subset of $\mathscr{U}$, it suffices to show that, if $\psi_{\bu}$ are real numbers satisfying
\begin{equation}\label{eqn:linindepproof}
	\sum_{\bu\in\cB}\psi_{\bu}\bu=\bigoplus_{i\leqslant k}\bzero_m,
\end{equation}
then we must have $\psi_{\bu}=0$ for all $\bu\in\cB$.  To prove that this is the case, we firstly note that if $\bu=\e_{1,w_1}+\cdots+\e_{k,w_k}\in\cB$ is such that $w_i\neq1$ for some $i\leqslant m$, then by definition of $\mathscr{B}$ we must have $u_{i,w_i}=1$ and $v_{i,w_i}=0$ for all $\bv\in\cB\backslash\{\bu\}$, forcing $\psi_{\bu}=0$.  So, we must have $\psi_{\bu}=0$ in \eqref{eqn:linindepproof} for all $\bu\in\cB\backslash\{\e_{1,1}+\cdots+\e_{k,1}\}$, which is sufficient to prove that $\cB$ is linearly independent.

(ii) \textit{The set $\mathscr{B}$ is a basis for the span of $\mathscr{U}$.}  It is trivial that $\mathscr{B}\subset\mathscr{U}$, so it suffices to prove that $\vecspan\cU\subseteq\vecspan\mathscr{B}$.  Firstly, consider the subsets
\begin{equation*}
    \cU(n)\colonequals\Big\{\sum_{i\leqslant k}\e_{i,w_i}\in\cU:\#\{i\leqslant k:w_i\neq1\}=n\Big\},
\end{equation*}
which clearly satisfy
\begin{equation*}
    \bigcup_{n\leqslant k}\cU(n)=\cU.
\end{equation*}
Explicitly, $\cU(n)$ is the set of tuples $\e_{1,w_1}+\cdots+\e_{k,w_k}\in\cU$ such that $w_i\neq1$ for exactly $n$ distinct columns $i$.  We shall now prove that $\cU(n)\subseteq\vecspan\cB$ for all $n\leqslant k$ by induction on $n$.  Firstly, we note that $\cU(n)\subseteq\cB$ for the base cases $u=0,\:1$.  Next, suppose that $n\geqslant2$ and that $\cU(n')\subseteq\vecspan\cB$ for all $n'<n$.  Then, for any $\bu\in\cU(n)$ there exist distinct $i_1,i_2\leqslant k$ with $w_{i_1},w_{i_2}\neq1$ such that
\begin{align*}
    \bu =\e_{i_1,w_{i_1}}+\e_{i_2,w_{i_2}}+\bq\quad \text{where} \quad\bq=\sum_{\substack{i\leqslant k\\i\neq i_1,i_2}}\e_{i,w_i}
\end{align*}
with $w_i\neq1$ for exactly $(n-2)$-many columns $i\in\{1,\ldots,k\}\backslash\{i_1,i_2\}$.  Notice that\begin{align*}
&\bu = (\e_{i_1,1}+\e_{i_2,w_{i_2}}+\bq)+(\e_{i_1,w_{i_1}}+\e_{i_2,1}+\bq) - (\e_{i_1,1}+\e_{i_2,1}+\bq), 
\end{align*}
where the first and second bracketed terms are elements of $\cU(n-1)$, while the last term is an element of $\cU(u-2)$.  Consequently, we have
\begin{equation*}
\cU(n)\subseteq\vecspan\big(\cU(n-1),\cU(n-2)\big)\subseteq\vecspan\cB,
\end{equation*}
the last inclusion following from the induction hypothesis.  

Having established that $\mathscr{B}$ is linearly independent, and moreover a basis for the span of $\mathscr{U}$, we have completed the proof of Theorems~\ref{thm:mainthm2} and \ref{thm:mainthm3}.
\end{proof}
Note that the appropriate choice of $\ba$ for use in Theorem~\ref{thm:mostgen} is not always unique, though there are of course choices that are more natural than others.\newline

\section{Some concluding remarks.}\label{sec:rem}
In this final section, we outline some further applications of Theorems~\ref{thm:mainthm2}~and~\ref{thm:mainthm3}, as well as briefly discuss some related problems.\newline
\subsection{Some non-explicit results.}\label{sec:nonexplicit}
For general choices of $\bgamma$, it may not always be possible to determine explicitly the degree and leading coefficient of the polynomial $Q$ in Theorem~\ref{thm:mainresult}, on account of the choice of $\ba$ not satisfying the conditions of Theorem~\ref{thm:mostgen}.  Nonetheless, in some of these cases, it still may be possible to obtain an asymptotic for $\energy_{\bgamma,\cN}(H^{\bb})$ in terms of a polynomial $Q$ of non-explicit degree and leading coefficient.  Indeed, we have the following result, which improves Theorem~\ref{thm:mainresult} under a minor additional condition.
\begin{prop}\label{prop:51}
    Suppose that $\lambda>0$ is such that, for all $\eta>0$ there exists a constant $c_{\eta}>0$ such that\begin{align}\label{eqn:cHlambda}
        \energy_{\bgamma,\cN}(H^{\bb})\geqslant c_\eta H^{\lambda-\eta}
    \end{align} for infinitely-many $H$.  If there exists an $\ba$ satisfying the hypothesis of Theorem~\ref{thm:mainresult}, such that $\langle\ba,\bb\rangle=\lambda$, then there exists a polynomial $R$, not identically zero, and a constant $\vartheta>0$ such that
    \begin{equation*}
        \energy_{\bgamma,\cN}(H^{\bb})=H^\lambda R(\log H)+o(H^{\lambda-\vartheta})\quad\text{as}\quad H\to\infty.
    \end{equation*}
\end{prop}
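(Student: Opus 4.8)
The plan is to apply Theorem~\ref{thm:mainresult} directly with the given tuple $\ba$, and then to use the hypothesised lower bound \eqref{eqn:cHlambda} to exclude the possibility that the resulting polynomial vanishes identically. Since $\ba$ is assumed to satisfy the hypothesis of Theorem~\ref{thm:mainresult}, that result furnishes a polynomial $Q$, of degree not exceeding $\kappa_{\bgamma}(\ba)$, together with a constant $\vartheta>0$ depending at most on $\ba$, $\bb$, and $\bgamma$, such that
\begin{equation*}
    \energy_{\bgamma,\cN}(H^{\bb})=H^{\langle\ba,\bb\rangle}Q(\log H)+o(H^{\langle\ba,\bb\rangle-\vartheta})=H^{\lambda}Q(\log H)+o(H^{\lambda-\vartheta})\quad\text{as}\quad H\to\infty,
\end{equation*}
where in the last step we have invoked the assumption $\langle\ba,\bb\rangle=\lambda$. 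It therefore suffices to show that $Q$ is not the zero polynomial, since we may then simply put $R=Q$, which moreover inherits the degree bound $\deg R\leqslant\kappa_{\bgamma}(\ba)$.

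To see that $Q\not\equiv0$, I would argue by contradiction. Suppose $Q\equiv0$; then the displayed formula collapses to $\energy_{\bgamma,\cN}(H^{\bb})=o(H^{\lambda-\vartheta})$, so that for every $\varepsilon>0$ there is an $H_0(\varepsilon)$ with $\energy_{\bgamma,\cN}(H^{\bb})\leqslant\varepsilon H^{\lambda-\vartheta}$ whenever $H\geqslant H_0(\varepsilon)$. On the other hand, applying \eqref{eqn:cHlambda} with the choice $\eta=\vartheta/2$, there is a constant $c_{\vartheta/2}>0$ and an infinite, hence unbounded, set of values of $H$ along which $\energy_{\bgamma,\cN}(H^{\bb})\geqslant c_{\vartheta/2}H^{\lambda-\vartheta/2}$. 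Comparing the two estimates along that set gives $c_{\vartheta/2}H^{\vartheta/2}\leqslant\varepsilon$ for all sufficiently large such $H$; letting $H\to\infty$ along the set, the left-hand side diverges, which is absurd. Hence $Q\not\equiv0$, and the proposition follows.

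I do not expect any real obstacle here, as the argument reduces to a clean contradiction between a power-saving upper bound and an almost-matching lower bound; the only point demanding a little care is to select the exponent $\eta$ in \eqref{eqn:cHlambda} strictly below the saving $\vartheta$ supplied by Theorem~\ref{thm:mainresult}, so that the hypothesised lower bound genuinely overrides the error term were $Q$ to vanish. It is also worth remarking in the write-up that, in contrast with the situation of Theorem~\ref{thm:mostgen}, the degree and leading coefficient of $R$ remain inexplicit in the generality considered here, precisely because the chosen $\ba$ is not assumed to satisfy \eqref{eqn:gammaisa}.
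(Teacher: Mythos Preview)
Your argument is correct and is precisely the intended one: the paper does not give an explicit proof of this proposition, treating it as an immediate consequence of Theorem~\ref{thm:mainresult} together with the lower bound \eqref{eqn:cHlambda}, which is exactly what you have written out. One cosmetic remark: the inference ``infinite, hence unbounded'' is not valid for arbitrary infinite subsets of $\rr$, but it is the intended reading of ``infinitely-many $H$'' in this asymptotic context (and is automatic if $H$ ranges over $\nn$), so you may wish to phrase that step as ``for arbitrarily large $H$'' instead.
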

The main advantage of the above result is that the polynomial $R$ is necessarily non-zero for all sufficiently large $H$, as opposed to the polynomial $Q$ in Theorem~\ref{thm:mainresult} which may be identically zero.  Thus, the additional condition \eqref{eqn:cHlambda} allows us to establish an asymptotic formula, rather than simply a majorisation.  Of course, an upper bound for the degree of $R$ can easily be determined if $\ba$ is known explicitly.

As with Theorems~\ref{thm:mainthm2}~and~\ref{thm:mainthm3}, we may also apply Proposition~\ref{prop:51} to study certain types of restricted divisor functions. For example, consider a generalisation of the divisor finction $\tau_m(n;\mathbf{X})$, given by\begin{equation*}
    \tau_{\bfxi}(n;\mathbf{X})=\#\{\bd\in\fB^{m}(\mathbf{X}):d_1^{\xi_1}\cdots d_\ell^{\xi_m}=n\},
\end{equation*}
where $\bfxi\in\N^m$ for some fixed natural number $m$.  It is not difficult to see, similarly to as in the proof of Theorems~\ref{thm:mainthm2}~and~\ref{thm:mainthm3}, that we have the relation
\begin{equation*}
    \energy_{\bgamma}(H^{\bb})=\sum_{n}\tau_{\bfxi}(n;H^{\bc})^k\quad\text{where}\quad\bgamma=\bigoplus_{i\leqslant k}\bfxi\quad\text{and}\quad\bb=\bigoplus_{i\leqslant k}\bc,
\end{equation*}
with $\boldm=k\bone_m$.  We additionally note that 
\begin{equation*}
    \sum_{n}\tau_{\bfxi}(n;H^{\bc})=\#\fB^m(H^{\bc})\sim H^{\|\bc\|}
\end{equation*}
as $H\to\infty$, and thus
\begin{equation}\label{eqn:loweroh}
    \sum_{n}\tau_{\bfxi}(n;H^{\bc})^k\geqslant(1+o(1))H^{\|\bc\|}
\end{equation}
for any natural number $k\geqslant2$.  Now, following the same argument as in the proof of Theorem~\ref{thm:mainthm2}, we see that $\ba=(1/k)\bone_{\boldm}$ satisfies the hypothesis of Theorem~\ref{thm:mainresult} for our choice of $\bgamma$ and $\bb$.  Consequently, appealing to \eqref{eqn:loweroh} and the fact that $\langle\ba,\bb\rangle=\|\bc\|$, we see by Proposition~\ref{prop:51} that
\begin{equation}\label{eqn:thm261}
    \sum_n\tau_{\bfxi}(n;H^{\bc})^{k}\sim H^{\|\bc\|}R_{\bfxi,k,\bc}(\log H)\quad\text{as}\quad H\to\infty
\end{equation}
for some polynomial $R_{\bfxi,k,\bc}$, not identically zero.  Slightly adapting the argument used to prove Theorems~\ref{thm:mainthm2}~and~\ref{thm:mainthm3}, we can show that this polynomial moreover satisfies
\begin{align*}
    &\deg R_{\bfxi,k,\bc}\leqslant\sum_n\#\{j\leqslant m:\xi_j=n\}^k\\&\hspace{35mm}-\#\{\xi_j:j\leqslant m\}-\sum_nk\max(0,\#\{j\leqslant m:\xi_j=n\}-1),
\end{align*}
from which we derive the following.
\begin{prop}
    If $\bfxi\in\N^m$ has distinct ordinates, then there exists a constant $D_{\bfxi,k,\bc}>0$ and a real number $\vartheta_{\bfxi,k,\bc}>0$ such that
    \begin{equation*}
        \sum_n\tau_{\bfxi}(n;H^{\bc})^k=D_{\bfxi,k,\bc} H^{\|\bc\|}+o(H^{\|\bc\|-\vartheta_{\bfxi,k,\bc}})\quad\text{as}\quad H\to\infty.
    \end{equation*}
\end{prop}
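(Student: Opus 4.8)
The plan is to deduce the statement from \eqref{eqn:thm261} and Proposition~\ref{prop:51}, together with the degree bound for $R_{\bfxi,k,\bc}$ recorded immediately above, by specialising that bound to the case of distinct ordinates. Recall that, with $\bgamma=\bigoplus_{i\leqslant k}\bfxi$, $\bb=\bigoplus_{i\leqslant k}\bc$, $\boldm=k\bone_m$, and $\ba=(1/k)\bone_{\boldm}$, Proposition~\ref{prop:51} applies with $\lambda=\langle\ba,\bb\rangle=\|\bc\|$ and furnishes a polynomial $R_{\bfxi,k,\bc}$, not identically zero, and a constant $\vartheta_{\bfxi,k,\bc}>0$, such that $\sum_n\tau_{\bfxi}(n;H^{\bc})^k=H^{\|\bc\|}R_{\bfxi,k,\bc}(\log H)+o(H^{\|\bc\|-\vartheta_{\bfxi,k,\bc}})$ as $H\to\infty$.

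First I would evaluate the degree bound under the hypothesis that the ordinates of $\bfxi$ are pairwise distinct. In that case $\#\{j\leqslant m:\xi_j=n\}\in\{0,1\}$ for every $n$, so $\#\{j\leqslant m:\xi_j=n\}^k=\#\{j\leqslant m:\xi_j=n\}$ and $\max(0,\#\{j\leqslant m:\xi_j=n\}-1)=0$; summing over $n$ gives $\sum_n\#\{j\leqslant m:\xi_j=n\}^k=m=\#\{\xi_j:j\leqslant m\}$, whence the right-hand side of the degree bound collapses to $m-m-0=0$. Thus $\deg R_{\bfxi,k,\bc}\leqslant0$, i.e.\ $R_{\bfxi,k,\bc}$ is a constant, which I would denote $D_{\bfxi,k,\bc}$, so that the displayed asymptotic becomes exactly the formula in the statement.

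It then only remains to verify positivity, which I would extract from the lower bound \eqref{eqn:loweroh}: since $\sum_n\tau_{\bfxi}(n;H^{\bc})^k\geqslant(1+o(1))H^{\|\bc\|}$, comparison with the main term $D_{\bfxi,k,\bc}H^{\|\bc\|}$ forces $D_{\bfxi,k,\bc}\geqslant1>0$. I do not expect a genuine obstacle here, since the substantive input --- the degree bound, obtained by adapting the proof of Theorems~\ref{thm:mainthm2} and~\ref{thm:mainthm3} (in particular, by determining $\#\cR_{\bgamma}(\ba)$ and $\rank\cR_{\bgamma}(\ba)$ for $\ba=(1/k)\bone_{\boldm}$, which here fails the hypotheses of Theorem~\ref{thm:mostgen}) --- is already in hand; the only point requiring slight care is ensuring that the error term is the stronger $o(H^{\|\bc\|-\vartheta_{\bfxi,k,\bc}})$ supplied by Proposition~\ref{prop:51}, rather than merely the $o(H^{\|\bc\|})$ implicit in \eqref{eqn:thm261}.
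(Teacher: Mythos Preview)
Your proposal is correct and follows exactly the route the paper intends: the proposition is stated immediately after the degree bound with only the phrase ``from which we derive the following'', so the implicit proof is precisely your specialisation of that bound to the distinct-ordinate case, yielding $\deg R_{\bfxi,k,\bc}\leqslant 0$, together with the positivity coming from \eqref{eqn:loweroh}. Your remark that the power-saving error term is supplied by Proposition~\ref{prop:51} rather than the weaker \eqref{eqn:thm261} is also in line with the paper's set-up.
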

The above is an example of where the lower bound \eqref{eqn:lowerbound} is precisely of the correct order.  Note that applying Theorem~\ref{thm:mostgen}  to improve~\eqref{eqn:thm261} is not directly possible in this case, due to unavailability of a tuple $\ba$ satisfying the condition \eqref{eqn:gammaisa} of Theorem~\ref{thm:mostgen}.  The condition \eqref{eqn:gammaisa} itself is necessary in order for the condition \eqref{eqn:gstarcond} of Lemma~\ref{lem:dlB2} to be fulfilled.  This issue appears for most choices of $\bgamma$ that are not of a sufficiently nice form.  In addition, for many choices of $\bgamma$, we have $\min(\|\bb_1\|,\ldots,\|\bb_k\|)>\mu_{\bgamma,\bb}$, in which case, the trivial bounds
\eqref{eqn:upperbound} and \eqref{eqn:lowerbound} differ by a factor greater than $O_\varepsilon(H^\varepsilon)$.\newline

\subsection{Imposing further restrictions.}\label{sec:coprime}
As a porism of Theorem~\ref{thm:mostgen}, we have
\begin{equation}\label{eqn:ratioeqn}
    \energy_{\bgamma,\cN}(H^{\bb})\sim A_{\bgamma,\cN}\energy_{\bgamma}(H^{\bb})\quad\text{as}\quad H\to\infty,
\end{equation}
where
\begin{equation*}
    A_{\bgamma,\cN}=\prod_p\Big(\sum_{n\geqslant0}\frac{\nu_{\bgamma,\cN}(\ba;n,p)}{p^n}\Big)\Big(\sum_{n\geqslant0}\frac{\nu_{\bgamma}(\ba;n)}{p^n}\Big)^{-1},
\end{equation*}
with $\nu_{\bgamma}(\ba;n)=\#\{\br\in\cR_{\bgamma}:\langle\ba,\br\rangle=n\}$.  So e.g., fixing a $q\in\N$, and choosing $\cN$ to be the set of natural numbers prime to $q$, we may use \eqref{eqn:MA} and \eqref{eqn:ratioeqn} to show that
\begin{equation*}
    \sum_{(n,q)=1}\tau_2(n;(X,X))^2=\frac{2}{\zeta(2)}\prod_{p|q}\frac{(p-1)^3}{p^2(p+1)}X^2\log X+O(X^2)\quad\text{as}\quad X\to\infty.
\end{equation*}
More generally, we can use \eqref{eqn:ratioeqn} along with \eqref{eqn:gchigamma} to count the number of integer solutions to the multiplicative Diophantine equation \eqref{eqn:x1xk} where only select ordinates are restricted.  We simply offer the above example as a way of illustrating to the reader how this procedure can easily be followed by appealing to Theorem~\ref{thm:mostgen}.

We note that our method does not, unfortunately, allow us to count solutions to \eqref{eqn:x1xk} over the Piatetski-Shapiro or Beatty numbers, as in these cases $\cN$ is not closed under the Hadamard product.  This is also the main obstacle when trying to count solutions to \eqref{eqn:x1xk} lying in an incomplete set of residue classes modulo $\mathbf{q}$, for some fixed integer $\boldm$-tuple $\mathbf{q}$.  Of course, Theorem~\ref{thm:mostgen} can be applied to the nicer of these cases, such as when $\cN$ is taken to be the set of quadratic residues modulo $\bq$, or the set $\bone_{\boldm}+\bq\N$.  Additionally, if $\cN$ is too sparse, then the product \eqref{eqn:notfriable} will not converge for any $\sigma\in(0,1)$, in which case Theorem~\ref{thm:mainresult} is not applicable.  Such is the case with friable integers and the Piatetski-Shapiro sequence.\newline

\subsection{Applications to counting singular matrices.}\label{sec:singularmat}Using Theorem~\ref{thm:mainthm2}, we may show that there exists a constant $\vartheta>0$ such that
\begin{equation}\label{eqn:manew}
    M_{2,2}(X,X)=\frac{2}{\zeta(2)}X^2\log X+AX^2+o(X^{2-\vartheta})\quad\text{as}\quad X\to\infty
\end{equation}
for some constant $A$.  Of course, the constant $A$ is the same as the one given in \eqref{eqn:MA}, which supersedes \eqref{eqn:manew} in that the power-saving is explicit.  Indeed, for the smaller cases such as this, elementary methods tend to give stronger results than the more general machinery of La~Bret\`eche, which is best-suited to the larger systems of variables considered in this article.

We now note that \eqref{eqn:manew}, and thus \eqref{eqn:MA}, improves on a preliminary result of the first-named author in the recent work \cite{Af} on the arithmetic statistics of the set $\cD_2(H,\Delta)$ of $2\times 2$ integer matrices of determinant $\Delta$ whose entries are integers of absolute value not exceeding $H$.  Asymptotic formul\ae\:for $\#\cD_2(H,\Delta)$ when $\Delta\neq0$ have been given in~\cite{Af,GG1,GG2,Guria} for a large range of $\Delta$ relative to $H$.  For the case $\Delta=0$, the first-named author~\cite{Af} showed that the number $\#\mathscr{D}_2(H,0)$ is related to the moment $M_{2,2}(H,H)$ by virtue of the asymptotic relation\begin{align*}
    \#\cD_2(H,0)=8M_{2,2}(H,H)+16H^2+O(H),
\end{align*} 
from which we derive, appealing to \eqref{eqn:MA}, the following.
\begin{prop}\label{cor:det}As $H\to \infty$, we have
    \begin{align*}
    \#\cD_2(H,0)=\dfrac{16}{\zeta(2)} H^2\log H + (8A+16)H^2+O(H^{19/13}(\log H)^{7/13}),
\end{align*} where $A$ is as in \eqref{eqn:MA}.
\end{prop}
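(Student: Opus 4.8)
The plan is to combine the two ingredients that have already been assembled in the excerpt: the combinatorial reduction
\[
\#\cD_2(H,0)=8M_{2,2}(H,H)+16H^2+O(H)
\]
established by the first-named author in \cite{Af}, and the asymptotic formula \eqref{eqn:MA} of Ayyad, Cochrane, and Zheng for the second moment $M_{2,2}(X,X)=\sum_n\tau_2(n;(X,X))^2$. The key observation is simply that, for $X=H$ a positive integer, $M_{2,2}(H,H)$ counts precisely the quadruples $\bx\in[1,H]^4$ with $x_1x_2=x_3x_4$, so it coincides with the left-hand side of \eqref{eqn:MA}.

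First I would record that \eqref{eqn:MA}, taken at $X=H$, reads
\[
M_{2,2}(H,H)=\frac{2}{\zeta(2)}H^2\log H+AH^2+O\big(H^{19/13}(\log H)^{7/13}\big),
\]
with $A$ as in the statement of \eqref{eqn:MA}. Substituting this into the reduction formula gives
\[
\#\cD_2(H,0)=8\Big(\frac{2}{\zeta(2)}H^2\log H+AH^2+O\big(H^{19/13}(\log H)^{7/13}\big)\Big)+16H^2+O(H),
\]
and collecting the leading term, the secondary term, and the error terms yields the claimed formula, upon noting that $H\ll H^{19/13}(\log H)^{7/13}$, so that the $O(H)$ contributed by the reduction is absorbed into the error term inherited from \eqref{eqn:MA}.

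There is no genuine obstacle here; the proof is a direct substitution. The only points requiring a moment's care are bookkeeping — carrying the factor $8$ through the leading and secondary coefficients to obtain $16/\zeta(2)$ and $8A+16$ respectively — and checking that the power-saving error term of \eqref{eqn:MA} dominates the linear error from the reduction. I would also note, as the surrounding discussion in \S\,\ref{sec:singularmat} does, that one appeals to \eqref{eqn:MA} rather than to the weaker \eqref{eqn:manew} precisely in order to retain the explicit power-saving $O(H^{19/13}(\log H)^{7/13})$: the general machinery of Theorems~\ref{thm:mainresult}--\ref{thm:mainthm3} would only furnish $o(H^{2-\vartheta})$ with an unspecified $\vartheta>0$.
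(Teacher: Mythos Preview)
Your proposal is correct and follows exactly the approach indicated in the paper: the proposition is derived immediately by substituting the asymptotic \eqref{eqn:MA} for $M_{2,2}(H,H)$ into the combinatorial reduction $\#\cD_2(H,0)=8M_{2,2}(H,H)+16H^2+O(H)$ from \cite{Af}, and absorbing the $O(H)$ into the dominant error term. There is nothing further to add.
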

A related problem is that of counting the number of singular $2\times 2$ matrices whose entries are Egyptian fractions of a bounded height.  Precisely, this problem is concerned with the arithmetic statistics of the set \begin{align*}
    \cD_2(\Z^{-1};H,0)=\Big\{(a,b,c,d)\in\Z^4\cap\big([-H,H]\backslash\{0\}\big)^4:\det\begin{pmatrix}
        1/a &1/b\\1/c&1/d
    \end{pmatrix}=0   \Big\}.
\end{align*}
Now, it was shown in~\cite{AKOS} that \begin{align*}
    H^2 \ll \#\cD_2(\Z^{-1};H,0) \lle H^{2+\varepsilon},
\end{align*}
which we can sharpen, by appealing to Proposition~\ref{cor:det} and the fact that
\begin{equation*}
    \sum_{\substack{ad=0=bc\\|a|,|b|,|c|,|d|\leqslant H}}1=16H^2+O(H),
\end{equation*}
to the following asymptotic formula.
\begin{cor}\label{cor:detdet}As $H\to \infty$, we have
    \begin{align*}
    \#\cD_2(\Z^{-1};H,0) =\dfrac{16}{\zeta(2)} H^2\log H + 8AH^2+O(H^{19/13}(\log H)^{7/13}),
\end{align*} where $A$ is as in \eqref{eqn:MA}.    
\end{cor}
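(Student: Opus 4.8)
The plan is to realise $\cD_2(\Z^{-1};H,0)$ as the subset of $\cD_2(H,0)$ consisting of matrices with no vanishing entry, and then to feed in Proposition~\ref{cor:det}. First I would observe that, since every entry of a quadruple $(a,b,c,d)$ counted by $\cD_2(\Z^{-1};H,0)$ is nonzero, we have
\[
    \det\begin{pmatrix}1/a&1/b\\1/c&1/d\end{pmatrix}=\frac1{ad}-\frac1{bc}=\frac{bc-ad}{abcd},
\]
so this determinant vanishes precisely when $ad=bc$. Hence the identity map $(a,b,c,d)\mapsto(a,b,c,d)$ identifies $\cD_2(\Z^{-1};H,0)$ with the set of $(a,b,c,d)\in(\Z\cap[-H,H])^4$ satisfying $ad=bc$ and $abcd\neq0$, that is, with the members of $\cD_2(H,0)$ all of whose four entries are nonzero.

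Next I would quantify the discrepancy between this set and $\cD_2(H,0)$. If $(a,b,c,d)\in(\Z\cap[-H,H])^4$ satisfies $ad=bc$ and $abcd=0$, then some entry vanishes, which forces one of the products $ad$, $bc$ to vanish, and hence, by $ad=bc$, both; conversely $ad=0=bc$ trivially implies $ad=bc$ and $abcd=0$. Therefore
\[
    \#\cD_2(\Z^{-1};H,0)=\#\cD_2(H,0)-\#\{(a,b,c,d)\in(\Z\cap[-H,H])^4:ad=0=bc\},
\]
and the subtracted quantity factors over the pairs $(a,d)$ and $(b,c)$ as $\big(\#\{(x,y):|x|,|y|\leqslant H,\ xy=0\}\big)^2=(4H+1)^2=16H^2+O(H)$, which is exactly the count recorded in the excerpt.

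Finally I would substitute the asymptotic formula of Proposition~\ref{cor:det}, namely $\#\cD_2(H,0)=\tfrac{16}{\zeta(2)}H^2\log H+(8A+16)H^2+O(H^{19/13}(\log H)^{7/13})$. The two $16H^2$ contributions cancel, the error $O(H)$ is absorbed into $O(H^{19/13}(\log H)^{7/13})$, and one is left with
\[
    \#\cD_2(\Z^{-1};H,0)=\frac{16}{\zeta(2)}H^2\log H+8AH^2+O\big(H^{19/13}(\log H)^{7/13}\big),
\]
as claimed. There is no genuine obstacle in this argument: the only step requiring a moment's care is the verification that $\{ad=bc\}\cap\{abcd=0\}$ collapses to $\{ad=0=bc\}$, which guarantees that passing from $\cD_2(H,0)$ to $\cD_2(\Z^{-1};H,0)$ removes precisely the clean count supplied above, rather than some messier set whose cardinality would be harder to control.
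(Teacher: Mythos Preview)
Your proposal is correct and follows essentially the same approach as the paper: both identify $\cD_2(\Z^{-1};H,0)$ with the nonzero-entry subset of $\cD_2(H,0)$, subtract the count $\sum_{ad=0=bc}1=16H^2+O(H)$, and apply Proposition~\ref{cor:det}. Your write-up simply makes explicit the two observations the paper takes for granted, namely that the Egyptian-fraction determinant vanishes iff $ad=bc$, and that $\{ad=bc\}\cap\{abcd=0\}=\{ad=0=bc\}$.
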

Note that, Proposition~\ref{cor:det} and Corollary~\ref{cor:detdet} should be viewed as consequences of the estimate \eqref{eqn:MA}, rather than as applications of the main results of this article.\newline

\subsection{Generalisations to other $L$-functions.}\label{sec:lfunct}
It is well-known that the number $\tau_m(n)$ is precisely the $n^{th}$ Dirichlet coefficient of $\zeta(s)^m$, and we see that $\tau_m(n;\mathbf{X})$ is analogously related to a product of $m$-many truncations of the Dirichlet series for the $\zeta$-function.  Indeed, we clearly have the relation$$\Big(\sum_{d\leqslant X_1}d^{-s}\Big)\cdots\Big(\sum_{d\leqslant X_m}d^{-s}\Big)=\sum_{n}\tau_m(n;\mathbf{X})n^{-s}.$$ We can more generally consider moments of the coefficients of Dirichlet polynomials which arise from products of truncations of other $L$-functions.  For example, consider a collection of arithmetical objects $f_1,\ldots,f_m$, each to which an $L$-function $L(s,f_j)$ can be associated.  The forthcoming work \cite{Cnew} of the second-named author is partly concerned with the moments of weighted restricted divisor functions of the type $$\mathop{\sum_{d_1\leqslant X_1}\cdots\sum_{d_m\leqslant X_m}}_{d_1\cdots d_m=n}\frac{\lambda_{f_1}(d_1)\cdots \lambda_{f_m}(d_m)}{d_1^{\alpha_1}\cdots d_m^{\alpha_m}},$$ where $\lambda_{f_j}(d_j)$ denotes the $d_j^{th}$ Dirichlet coefficient of $L(s,f_j)$, and the $\alpha_j$ are fixed real shifts.  Of course, one can only hope to get an asymptotic formula in the case where the map $(s_1,\ldots,s_m)\mapsto L(s_1+\alpha_1,f_1)\cdots L(s_m+\alpha_m,f_m)$ has a pole of sufficiently large order at some point $(s_1,\ldots,s_m)\in\cc^m$.\newline
 
\subsection{Other future directions.}
The proof of Theorem~\ref{thm:mostgen} can be adapted to instead count the number of solutions $\bx\in\fB^{\boldm}(H^{\bb})\cap\cN$ to equations of the form \begin{align}\label{eqn:alphax}
    \alpha_1\bx_1^{\bgamma_1}\equiv\cdots\equiv\alpha_k\bx_k^{\bgamma_k}\pmod{q}
\end{align}for some fixed $\balpha\in\N^k$ and $q\in\N$.  This problem was previously considered by Ayyad, Cochrane and Zheng~\cite{Ayyad}, Garaev and Garcia~\cite{GarGar}, and Kerr~\cite{Kerr} for the congruence $x_1x_2\equiv x_3x_4$ modulo a prime $p$.  Now, while the function $f$ corresponding to \eqref{eqn:alphax} for use in Lemmata~\ref{lem:dlB} and~\ref{lem:dlB2} is clearly not multiplicative, it is possible to split the problem into a finite number of counting problems of the form
\begin{equation*}
    \energy_{q,\bgamma,\cN}(\mathbf{U})\colonequals\#\big\{\by\in\fB^{\boldm}(\mathbf{U})\cap\cN:\by_1^{\bgamma_1}\equiv\cdots\equiv\by_k^{\bgamma_k}\!\!\!\!\pmod{q}\big\},
\end{equation*}
where $U_{i,j}=H^{b_{i,j}}/h_{i,j}$ for some small solution $\mathbf{h}$ to the congruence \eqref{eqn:alphax}.  Even more generally, we could consider the problem of counting the number of integer solutions to~\eqref{eqn:alphax} lying in the parallelepiped
\begin{equation*}
    \fP_{\cN}(\mathbf{X},\mathbf{Y})\colonequals\big\{\bd\in\cN: X_{i,j}<d_{i,j}\leqslant X_{i,j}+Y_{i,j}\:\forall(i,j)\in\cI^{\boldm}\big\},
\end{equation*}
where $\mathbf{X},\mathbf{Y}\in\R^{\boldm}$.  In particular, it would be interesting to determine the asymptotic behaviour of this number as each $X_{i,j}$ and $Y_{i,j}$ independently tend to infinity.  

Another related problem, is the study of the distribution of integer solutions to  \begin{align}\label{eqn:xmym}
    x_1\dots x_m-y_1\dots y_m=\Delta,
\end{align}where $\Delta \neq 0$.  The machinery of La~Bret\`eche cannot be directly applied to study the equation \eqref{eqn:xmym}, on account of the fact that the corresponding solution set is not closed under the Hadamard product.  Nonetheless, using different methods, the first-named author~\cite{Af} and Ganguly and Guria~\cite{GG1,GG2} have, in the case $m=2$, given asymptotic formul\ae\:for the number of solutions to~\eqref{eqn:xmym} in the box $[-H,H]^4$, as $H$ and $|\Delta|$ independently tend to infinity.

Lastly, motivated by the applications of such results in the works \cite{silverman,alina,alinac}, Pappalardi, Sha, Shparlinski, and Stewart \cite{pappalardi} studied the asymptotic behaviour of the number of multiplicatively-dependant $n$-tuples of algebraic integers of bounded height, with respect to different height functions.  Recall, for a fixed multiplicative group $G$, an $n$-tuple $(\nu_1,\ldots,\nu_n)\in G^n$ is said to be multiplicatively-dependant if there exists a $\bfxi\in\zz^n\backslash\{\bzero_n\}$ such that $\nu_1^{\xi_1}\cdots\nu_n^{\xi_n}=\text{id}_G$, where $\text{id}_G$ denotes the multiplicative identity in $G$.  In a similar direction, the number $$\#\{\bnu \in (\Z\cap [-H,H])^n\colon  \balpha\cdot \bnu =J, \bnu \:\text{is multiplicatively-dependant}\}$$ for a fixed $\balpha\in \Z^n$, $J\in \Z$ is the topic of study in the forthcoming work \cite{afifnew} of the first-named author with Iverson and Sanjaya.  This leads us to consider generalisations of our own work, such as the problem of determining the asymptotic behaviour of
\begin{equation*}
    \energy_{\cC,\cN}(H^{\bb})\colonequals\#\big\{\bx\in\fB^{\boldm}(H^{\bb})\cap\cN:\bx_1^{\bgamma_1}=\cdots=\bx_k^{\bgamma_k}\:\exists\bgamma\in\cC\big\}
\end{equation*}
as $H$ tends to infinity, where $\cC\subset\N^{\boldm}$ is fixed.  Now, the method of proof used in \cite{pappalardi} does not appeal to the work of La~Bret\`eche \cite{dlB,dlBcompter}, but rather makes critical use of a result of Widmer \cite{widmer}.  This is due to the fact that, in their setting, it is not possible to relate any sum of the form $S_f(X^{\bb})$ to the number of interest.  Nonetheless, it is likely that Lemmata~\ref{lem:dlB}~and~\ref{lem:dlB2} of the present article could be used to derive an asymptotic estimate for $\energy_{\cC,\cN}(H^{\bb})$ in the case where $\cC$ is sufficiently well-structured.\newline

\section*{Acknowledgements.} The authors would like to thank Alina Ostafe, Igor E. Shparlinski, and Liangyi Zhao for supports and suggestions during the preparation of this work.  We would also like to thank R\'{e}gis de~la~Bret\`eche, as well as Marc Munsch and Igor E. Shparlinski, for helpful correspondences regarding their articles \cite{dlB}, and \cite{MSh}, respectively.  Thanks are due to Marc Munsch for bringing the articles \cite{louboutin,Ayyad} to our attention, and to the anonymous referee for such a detailed report with many helpful suggestions.  

During the preparation of this work, the first-named author was supported by the Australian Research Council Grant DP230100530 and a University of New South Wales Tuition Fee Scholarship, while the second-named author was supported by the Australian Government, the University of New South Wales, and New College, through R.T.P.\:, U.P.A., and Ph.D.\:Scholarships, respectively.\newline
 
\renewcommand*{\mkbibnamefamily}[1]{\textsc{#1}}
\renewcommand*{\mkbibnameprefix}[1]{\textsc{#1}}

\centerline{\textit{References.}}
\emergencystretch 2em
\printbibliography[heading=none]

\end{document}